\DeclareMathOperator{\depth}{depth}
\DeclareMathOperator{\conv}{conv}
\DeclareMathOperator{\relint}{int}
\DeclareMathOperator{\wgt}{weight}
\newtheorem {thm}{Theorem}
\newtheorem*{thm*}{Theorem}
\newtheorem*{conj*}{Conjecture}
\newtheorem {lem}[thm]{Lemma}
\newtheorem {prop}[thm]{Proposition}
\newtheorem {cor}[thm]{Corollary}
\begin{document}

\title{An improvement on the Rado bound for the centerline depth%
\thanks{The first author was supported by ERC Advanced Research Grant no. 267165 (DISCONV).}}

\author{Alexander Magazinov\thanks{Tel Aviv University, Faculty of Exact Sciences, School of Mathematics.\newline%
E-mail: {\it magazinov@post.tau.ac.il}} \and %
Attila P\'or\thanks{West Kentucky University, Bowling Green, Department of Mathematics.\newline%
E-mail: {\it attila.por@wku.edu}}%
}

\maketitle

\begin{abstract}
Let $\mu$ be a Borel probability measure in $\mathbb R^d$. For a $k$-flat $\alpha$ consider the value $\inf \mu(H)$, where $H$ runs through
all half-spaces containing $\alpha$. This infimum is called the {\it half-space depth} of $\alpha$.

Bukh, Matou\v{s}ek and Nivasch conjectured that for every $\mu$ and every $0 \leq k < d$ there exists a $k$-flat with
the depth at least $\tfrac{k + 1}{k + d + 1}$. The Rado Centerpoint Theorem implies
a lower bound of $\tfrac{1}{d + 1 - k}$ {\it (the Rado bound)}, which is, in general, much weaker.
Whenever the Rado bound coincides with the bound conjectured by Bukh, Matou\v{s}ek and Nivasch, i.e., for $k = 0$ and $k = d - 1$, it
is known to be optimal.

In this paper we show that for all other pairs $(d, k)$ one can improve on the Rado bound.
If $k = 1$ and $d \geq 3$ we show that there is a 1-dimensional line with the depth at least $\tfrac{1}{d} + \tfrac{1}{3d^3}$.
As a corollary, for all $(d, k)$ satisfying $0 < k < d - 1$ there exists a $k$-flat with depth at least
$\tfrac{1}{d + 1 - k} + \tfrac{1}{3(d + 1 - k)^3}$.
\end{abstract}

\paragraph*{Keywords:} {\it Half-space depth, centerflat, centerline, Rado theorem.}

\paragraph*{MSC classification:} 52C35, 52A30, 68U05.

\section{Introduction}

Let $\alpha$ be a $k$-flat and $\mu$ a Borel probability measure in $\mathbb R^d$ ($0 \leq k < d$). Define the {\it depth} of $\alpha$ as follows:
$$\depth_{\mu}(\alpha) = \inf \{\mu(H) : \text{$H$ is a closed half-space, $\alpha \subset H$} \}.$$
Sometimes the depth defined above is called {\it half-space depth} or {\it Tukey depth} in order to distinguish it from other commonly used notions of depth.
We will write simply $\depth(\alpha)$ if the measure is clear from the context.


One of the most important results concerning the notion of half-space depth is the Rado Centerpoint Theorem.

\begin{thm*}[Rado Centerpoint Theorem,~\cite{rad}]
For every Borel probability measure $\mu$ in $\mathbb R^d$ there exists a point $x$ such that $\depth(x) \geq \tfrac{1}{d + 1}$.
\end{thm*}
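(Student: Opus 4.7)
The strategy is to combine a union bound on measures with Helly's theorem. A point $x$ has $\depth_\mu(x) \geq 1/(d+1)$ if and only if $x$ lies in every closed half-space $H$ with $\mu(H) > d/(d+1)$: if $x$ is outside such an $H$, one can find a closed half-space $H' \ni x$ contained in $\mathbb{R}^d \setminus H$, whence $\mu(H') \leq 1 - \mu(H) < 1/(d+1)$, contradicting the depth bound. So the goal reduces to showing that the family $\mathcal{F}$ of closed half-spaces with $\mu$-measure exceeding $d/(d+1)$ has a common point.

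The finite-Helly input is a direct union bound. For any $H_1, \ldots, H_{d+1} \in \mathcal{F}$, each complement has measure strictly less than $1/(d+1)$, so
\[
\mu\Bigl(\bigcup_{i=1}^{d+1} (\mathbb{R}^d \setminus H_i)\Bigr) < (d+1)\cdot \frac{1}{d+1} = 1,
\]
which forces $\bigcap_{i=1}^{d+1} H_i \neq \emptyset$. Thus every $d+1$ members of $\mathcal{F}$ meet, which is the Helly hypothesis.

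To conclude in the infinite setting, I would augment $\mathcal{F}$ with the $2d$ half-spaces bounding a cube $[-M,M]^d$, where $M$ is chosen large enough that each of them has $\mu$-measure greater than $d/(d+1)$ (possible since $\mu$ is a probability measure). The same union bound still shows that every $d+1$ members of the enlarged family meet, and this family contains a finite subfamily---the $2d$ cube half-spaces---whose intersection is compact. The standard extension of Helly's theorem to families of closed convex sets having a compact finite subintersection then yields a point in the intersection of all members, and in particular in $\bigcap \mathcal{F}$. The only subtle step is this passage from finite to infinite across unbounded convex sets, which is handled cleanly by the cube truncation; the substance of the theorem is the elementary measure-theoretic union bound.
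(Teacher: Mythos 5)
The paper does not prove this statement at all: the Rado Centerpoint Theorem is quoted as a known result with a citation to Rado's 1946 paper, so there is no in-paper proof to compare against. Your argument is the standard Helly-based proof of the centerpoint theorem and it is correct: the union bound shows that any $d+1$ half-spaces of measure exceeding $d/(d+1)$ have a common point (their complements have total measure strictly below $1$, so they cannot cover $\mathbb{R}^d$), and the cube truncation is exactly the right device to pass from finite to infinite Helly, since the $2d$ added half-spaces are themselves in the heavy family for $M$ large and their intersection is compact, giving the finite intersection property for the compact sets $H \cap [-M,M]^d$. The one loose end is in your opening equivalence: the implication you actually need downstream is that a common point of $\mathcal{F}$ has depth at least $1/(d+1)$, whereas the implication you justify in writing is the converse. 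The needed direction is the mirror argument --- if a closed half-space $H'$ containing $x$ had $\mu(H') < 1/(d+1)$, its open complement has measure greater than $d/(d+1)$, and by continuity of measure from below a closed half-space slightly inside that open complement still has measure greater than $d/(d+1)$, hence belongs to $\mathcal{F}$ and misses $x$. Adding that sentence makes the proof complete.
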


Bukh, Matou\v{s}ek and Nivasch proposed the following conjecture, which, if proved, would be a generalization of the Rado Theorem.

\begin{conj*}[Bukh, Matou\v{s}ek, Nivasch,~\cite{bmn}]
Let $(d, k)$ be a pair of integers with $0 \leq k < d$. Then for every Borel probability measure $\mu$ in
$\mathbb R^d$ there exists a $k$-flat $\alpha$ in $\mathbb R^d$ (a {\it centerflat}) such that
\begin{equation}\label{eq:bmn-conj}
\depth(\alpha) \geq \frac{k + 1}{k + d + 1}.
\end{equation}
\end{conj*}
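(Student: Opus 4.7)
The plan is to attack the Bukh--Matou\v{s}ek--Nivasch conjecture through a reduction to a finite extremal problem, followed by a fixed-point argument on the affine Grassmannian of $k$-flats in $\mathbb{R}^d$.

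First, by weak-$*$ approximation of $\mu$ and the upper semicontinuity of $\depth_\mu(\alpha)$ on the one-point compactification of the Grassmannian, it suffices to prove the bound when $\mu$ is uniform on a finite point set $P \subset \mathbb{R}^d$ in general position. For such $\mu$ the optimal centerflat $\alpha$ may be taken to be the affine hull of a $(k+1)$-subset of $P$, so the conjecture becomes a combinatorial extremal statement: every finite $P$ admits a $(k+1)$-subset whose affine hull meets every half-space from at least a $\tfrac{k+1}{k+d+1}$ share of $P$.

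Next, fix $x_0 \in \alpha$ and parameterize the half-spaces $H \supset \alpha$ by unit vectors $v$ in the $(d-k)$-dimensional orthogonal complement $\alpha^{\perp}$, via $H_v = \{x : \langle x - x_0, v\rangle \leq 0\}$. The condition $\depth(\alpha) \geq \tfrac{k+1}{k+d+1}$ then says exactly that $x_0$ is a $\tfrac{k+1}{k+d+1}$-centerpoint of every one-dimensional projection of $\mu$ along directions in $\alpha^{\perp}$. Using this reformulation, I would define a self-map $\Phi$ of the affine Grassmannian: given $\alpha$, push $\mu$ forward along the quotient $\mathbb{R}^d \to \mathbb{R}^d/\mathrm{lin}(\alpha)$, apply a \emph{colored} Rado-style selection to the pushed measure, and lift the selected point back through $\alpha$ to a new $k$-flat $\Phi(\alpha)$. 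A fixed point of $\Phi$, produced by a Brouwer-type argument after a suitable compactification, would be the candidate centerflat, with its defining equation encoding the desired centerpoint condition in every perpendicular direction simultaneously.

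The key geometric ingredient must be a colored selection based on a partition of $P$ into $k+1$ classes, choosing a flat meeting one representative per class, so as to extract the full factor $k+1$ that appears in the numerator of $\tfrac{k+1}{k+d+1}$; this would be the $k$-flat analogue of the colored Carath\'eodory theorem underlying the Rado Centerpoint Theorem. The main obstacle is precisely this: verifying that such a colored construction can be made consistent with the fixed-point equation of $\Phi$, and that its fixed points genuinely achieve $\tfrac{k+1}{k+d+1}$ rather than only the Rado bound $\tfrac{1}{d+1-k}$ obtained from an uncolored, iterated projection. Bridging this gap is the combinatorial bottleneck that past approaches---including the more modest improvement carried out in the present paper---have only partially resolved, and it is where I expect any direct attack on the conjecture to stall.
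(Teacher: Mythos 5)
The statement you are attempting is the Bukh--Matou\v{s}ek--Nivasch \emph{conjecture}, which the paper explicitly presents as open; the paper does not prove it and does not claim to. What the paper actually proves is the much weaker Theorem~\ref{thm:main}, a small improvement on the Rado bound for $k = 1$ (and, by projection, Corollary~\ref{cor:main}). So there is no ``paper's own proof'' of this statement to compare against, and any complete argument you offered would in fact be a resolution of an open problem. Your write-up does not claim to close the argument: you yourself identify the pivotal step --- producing a colored selection compatible with the fixed-point equation of $\Phi$ that extracts the numerator $k+1$ rather than just $1$ --- as an unresolved ``combinatorial bottleneck.'' That admission is correct, and it means this is a program sketch, not a proof.

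Beyond the acknowledged gap, there is a concrete error early in the reduction. The claim that for a uniform measure on a finite set $P$ in general position ``the optimal centerflat $\alpha$ may be taken to be the affine hull of a $(k+1)$-subset of $P$'' is false already for $k = 0$: with $P$ the three vertices of a triangle in $\mathbb R^2$, no point of $P$ has depth $\tfrac{1}{3}$ (each vertex has depth $0$ in the continuous sense and is supported by a line separating it from the other two), whereas the centroid does. In general the deepest point or flat of a discrete measure is not required to pass through sample points, so the proposed passage to a finite extremal statement over $(k+1)$-subsets of $P$ does not start. Likewise, the semicontinuity direction needs care: $\depth_\mu(\alpha)$ as a function of $\mu$ (in weak-$*$) is upper semicontinuous for closed half-spaces but not lower semicontinuous, which is the direction one needs to transfer a lower bound proved for approximating discrete measures to the limit measure $\mu$; the paper instead works with nice (absolutely continuous) measures, where these continuity issues are managed directly (Proposition~\ref{prop:nice_mes}). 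Finally, the fixed-point map $\Phi$ is never actually constructed, and the paper's argument proceeds along an entirely different route (Tukey medians, a structural map $T^V_a$ into the space of simplices $\mathcal T(V)$, and an obstruction-theoretic contradiction via Stiefel--Whitney classes of the tautological bundle over $\mathbb{RP}^d$), aimed at a far more modest bound than the conjecture.
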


The conjecture is true for $k = 0$ (in this case the conjecture turns exactly into the Rado Theorem),
$k = d - 1$ (a trivial case), and $k = d - 2$ (a case settled by Bukh, Matou\v{s}ek and Nivasch~\cite{bmn} themselves).

A result by Klartag~\cite{kla} implies that, if $d - k$ is fixed, then for every $\varepsilon > 0$, with $d$ sufficiently large
depending on $\varepsilon$, and for every Borel probability measure $\mu$ in $\mathbb R^d$ there exists a $k$-flat $\alpha$ in $\mathbb R^d$
such that
$$\depth(\alpha) > \frac{1}{2} - \varepsilon.$$

One can see that for $k = 0$ and $k = d - 1$ the constant $\tfrac{k + 1}{k + d + 1}$ in \eqref{eq:bmn-conj} cannot be increased.
This is also the case for $k = 1$, as shown by Bukh and Nivasch~\cite{bn}.

For the purposes of our paper it will be convenient to think about a depth of a flat in terms of projections. If $\mu$ is a Borel probability measure in
$\mathbb R^d$, and $\alpha$ is a $k$-flat, we will write $\pi_{\alpha}$ for the orthogonal projection from $\mathbb R^d$ onto the $(d - k)$-space
$\beta = \alpha^{\bot}$ (i.e., $\pi_{\alpha}(\alpha)$ is a single point). Let $\mu_{\alpha}$ be a projection of $\mu$ along $\alpha$, i.e.,
a measure in $\beta$ such that for every Borel set $X \subseteq \beta$ one has
$$\mu_{\alpha}(X) = \mu(\pi_{\alpha}^{-1}(X)).$$
(Of course, $\mu_{\alpha}$ is a Borel probability measure in $\beta$.) Then for the point $o = \pi_{\alpha}(\alpha)$ one has the identity
$$\depth_{\mu_{\alpha}}(o) = \depth_{\mu}(\alpha).$$
We also note that a projection of a measure is sometimes called a {\it marginal}, see~\cite{kla}.

The Rado Centerpoint Theorem implies that for every $d$, $k$ and $\mu$ as above one can find a $k$-flat $\alpha$ such that
\begin{equation}\label{eq:triv-bound}
\depth(\alpha) \geq \frac{1}{d - k + 1}.
\end{equation}
(In fact, such a $k$-flat exists in any $k$-dimensional direction.) The bound of~\eqref{eq:triv-bound} will be called the
{\it Rado bound}.

In this paper we prove that for $k = 1$ the Rado bound~\eqref{eq:triv-bound} is not optimal, except for the case $d = 2$.
Namely, we have the following result:

\begin{thm}\label{thm:main}
For every $d \geq 3$ and for every Borel probability measure $\mu$ in $\mathbb R^d$ there exists a (1-dimensional) line $\ell$ with
$$\depth(\ell) \geq \frac{1}{d} + \frac{1}{3d^3}.$$
\end{thm}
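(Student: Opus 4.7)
The plan is to argue by contradiction, exploiting the near-rigidity of extremal configurations for the Rado Centerpoint Theorem in dimension $d-1$. Using the projection reformulation stated in the introduction, a line $\ell$ with direction $v$ has depth equal to $\depth_{\mu_\ell}(o)$, where $o$ is a single point in the $(d-1)$-dimensional marginal $\mu_\ell$. By standard weak-$*$ approximation one may reduce to the case of a finitely supported $\mu$, and by compactness of the sphere of directions one can select an optimal direction $v^*$ together with a line $\ell^*$ in direction $v^*$ whose depth $\delta^*$ is maximal among all lines. The Rado theorem applied to $\mu_{\ell^*}$ in $\mathbb{R}^{d-1}$ already yields $\delta^*\ge 1/d$, and we assume for contradiction that $\delta^* < \tfrac{1}{d} + \tfrac{1}{3d^3}$.

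The key tool is a quantitative stability form of Rado in dimension $d-1$: if a probability measure $\nu$ there has maximum half-space depth in the narrow range $[1/d,\, 1/d+\varepsilon)$ for sufficiently small $\varepsilon$, then $\nu$ must be close (in total variation or earth-mover sense) to an equally weighted measure $\tfrac{1}{d}\sum_{i=1}^{d}\delta_{p_i}$ supported on the vertices of a $(d-1)$-simplex. One would prove this by locating, at the near-centerpoint of $\nu$, a collection of $d$ half-spaces of measure close to $1/d$ whose outward complements are almost pairwise disjoint, and then using a Helly-type bookkeeping argument to show that most of the mass of $\nu$ sits in these nearly disjoint complements. Applying the stability statement to $\nu = \mu_{\ell^*}$ and pulling back through $\pi_{\ell^*}^{-1}$ shows that $\mu$ itself is concentrated near $d$ parallel lines in direction $v^*$, each carrying mass close to $1/d$, whose projections under $\pi_{\ell^*}$ form a near-simplex in $\mathbb{R}^{d-1}$.

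With this structure in hand, one perturbs the direction $v^*$ by a small rotation. If $\mu$ were literally supported on $d$ lines parallel to $v^*$ the theorem would be immediate, since one such line carries mass at least $1/d$ and hence depth at least $2/d$; so at least one cluster must have genuine $d$-dimensional spread. A small tilt of $v^*$ moves the $\pi_{\ell^*}$-image of that cluster in a controlled direction while disturbing the other footprints by a much smaller amount; by choosing the tilt so that the shifted footprint enters the interior of the opposite facet of the near-simplex, one obtains a new line of depth strictly greater than $\delta^*$, contradicting optimality. The main obstacle is the quantitative stability of Rado with error terms sharp enough to yield the claimed $1/(3d^3)$ improvement: the cubic factor arises from balancing three $1/d$-scale quantities, namely the mass of a single cluster, the geometric room available for tilting inside the near-simplex, and the linearization rate of $\mu(H)$ in the normal direction, and absorbing all lower-order error terms so that the three contributions combine cleanly is where the bulk of the technical work will lie.
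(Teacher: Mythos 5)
The first half of your plan --- assume the bound fails and extract from the near-extremality of Rado a rigid ``$d$ clusters whose footprints form a near-simplex'' structure in the projected measure --- is essentially the same as the paper's (its Sections 6--8 build exactly this: generating tuples of half-spaces of weight close to $1/(d+1)$, a Bijection Lemma matching the corresponding simplicial cones of mass $\approx 1/(d+1)$, and central rays of those cones). Where you diverge is the endgame: you work only at an optimal direction $v^*$ and try to derive a contradiction by a local perturbation, and this is where there is a genuine gap rather than deferred technical work.

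The step ``by choosing the tilt so that the shifted footprint enters the interior of the opposite facet of the near-simplex, one obtains a new line of depth strictly greater than $\delta^*$'' is unsupported and I do not believe it can be repaired. Tilting the direction by an angle $\epsilon$ displaces the footprint of a point $x$ within the hyperplane by roughly $\epsilon\,\langle x,v^*\rangle$; to move one cluster's footprint from a vertex of the near-simplex into the opposite facet requires a displacement comparable to the diameter of that simplex, hence a macroscopic tilt, under which all footprints move and the cluster structure guaranteed by your stability lemma (which is only established at $v^*$) may dissolve. More fundamentally, optimality of $v^*$ says precisely that no tilt improves the depth, and nothing in the local picture forbids this from being self-consistent: the near-simplex structure exists for \emph{every} direction, varies continuously, and over any contractible neighbourhood of $\mathbb{RP}^{d-1}$ it is an unobstructed local section-like object. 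The contradiction in the paper is global: the assignment $\ell\mapsto\{e_1,\dots,e_d\}$ yields a $d$-fold cover inside the tautological quotient bundle over projective space, which would force a nonvanishing section, contradicting the nonvanishing of the top Stiefel--Whitney class (Lemma 4). A perturbation argument at a single direction cannot see this obstruction. Two smaller errors: for a measure supported exactly on $d$ parallel lines of mass $1/d$ with simplex footprint, a line carrying mass $1/d$ has depth exactly $1/d$, not $2/d$ as you claim (a half-space can pick off just that one line), so even your ``immediate'' degenerate case fails; and the reduction should be to smooth, everywhere-positive measures rather than finitely supported ones, since you need the maximal depth and the optimizing line to depend continuously on the direction, which fails for atomic measures.
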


Theorem~\ref{thm:main} also implies that the Rado bound~\eqref{eq:triv-bound} is optimal {\it only} for the cases $k = 0$ and
$k = d - 1$, as stated in the following Corollary~\ref{cor:main}. We emphasize that there is still a huge gap between the
inequality~\eqref{eq:main-cor} we were able to prove, and the conjectured inequality~\eqref{eq:bmn-conj}.

\begin{cor}\label{cor:main}
For every $d \geq 3$, every $k$ such that $1 \leq k \leq d - 2$ and every Borel probability measure $\mu$ in $\mathbb R^d$ there exists a
$k$-dimensional flat $\alpha$ with
\begin{equation}\label{eq:main-cor}
\depth(\alpha) \geq \frac{1}{d - k + 1} + \frac{1}{3(d - k + 1)^3}.
\end{equation}
\end{cor}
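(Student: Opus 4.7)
The plan is to derive Corollary~\ref{cor:main} from Theorem~\ref{thm:main} by a standard projection argument, with no extra ideas beyond what is already in the setup recalled in the excerpt. Given $\mu$ on $\mathbb{R}^d$ with $d\geq 3$ and $1\leq k\leq d-2$, I would pick an arbitrary $(k-1)$-dimensional linear subspace $L\subset\mathbb{R}^d$ and consider the orthogonal projection $\pi_L:\mathbb{R}^d\to L^{\bot}$. Setting $D:=\dim L^{\bot}=d-k+1$, the hypothesis $k\leq d-2$ yields $D\geq 3$, so Theorem~\ref{thm:main} applies to the pushforward measure $\mu_L$ on $L^{\bot}$ and produces a line $\ell'\subset L^{\bot}$ with
$$\depth_{\mu_L}(\ell')\geq \frac{1}{D}+\frac{1}{3D^{3}}.$$

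Next I would set $\alpha:=\pi_L^{-1}(\ell')=\ell'+L$, which is a $k$-flat in $\mathbb{R}^d$ because $\dim \ell'+\dim L=1+(k-1)=k$. The key observation is the equality $\depth_\mu(\alpha)=\depth_{\mu_L}(\ell')$, which is the flat-level analogue of the point-level identity $\depth_{\mu_\alpha}(o)=\depth_\mu(\alpha)$ noted earlier in the excerpt. To check it, I would use the fact that a closed half-space $H\subset\mathbb{R}^d$ contains $\alpha$ if and only if $H$ is invariant under translation by every vector of $L$ (since $\alpha$ contains a translate of $L$), which happens exactly when $H=\pi_L^{-1}(H')$ for some closed half-space $H'\subset L^{\bot}$ with $\ell'\subset H'$. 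For such $H$ the definition of the pushforward gives $\mu(H)=\mu_L(H')$, and taking infima over the two matching families of half-spaces proves the claimed equality.

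Combining these two observations yields
$$\depth_\mu(\alpha)=\depth_{\mu_L}(\ell')\geq \frac{1}{d-k+1}+\frac{1}{3(d-k+1)^{3}},$$
which is exactly inequality~\eqref{eq:main-cor}. I do not anticipate any real obstacle here: the only role of the hypothesis $k\leq d-2$ is to guarantee $d-k+1\geq 3$ so that Theorem~\ref{thm:main} is applicable to the projected measure, and the only substantive content is the half-space correspondence above, which follows directly from the translation-invariance of half-spaces containing a given flat. Thus the entire burden of the corollary is carried by Theorem~\ref{thm:main} itself.
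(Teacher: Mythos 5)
Your proposal is correct and is essentially identical to the paper's own proof: project along an arbitrary $(k-1)$-flat, apply Theorem~\ref{thm:main} in the resulting $(d-k+1)$-dimensional space, and take the preimage of the resulting line. You supply a few more details (the half-space correspondence justifying $\depth_\mu(\alpha)=\depth_{\mu_L}(\ell')$) than the paper, which simply asserts the conclusion.
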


\begin{proof}[Reduction to Theorem~\ref{thm:main}]
Choose an arbitrary $(k - 1)$-dimensional flat $\beta$. After projecting along $\beta$ onto $\mathbb R^{d - k + 1}$ we can apply
Theorem~\ref{thm:main}. Namely, we conclude that there is a line $\ell \subset \mathbb R^{d - k + 1}$ such that
$$\depth_{\mu_{\beta}}(\ell) \geq \frac{1}{d - k + 1} + \frac{1}{3(d - k + 1)^3}.$$
To finish the proof it is enough to put
$$\alpha = \pi_{\beta}^{-1}(\ell).$$

In the rest of the paper we prove Theorem~\ref{thm:main}. The body of the argument is contained in Sections 2--4. Sections
5--9 incorporate the proofs of the technical statements declared in Section 2.

\end{proof}

\section{Geometric part: statements}\label{sect:outline}

First, it will be convenient for us to prove Theorem~\ref{thm:main} for the $(d + 1)$-dimensional space rather than for the $d$-dimensional.
Next, we aim for a proof by contradiction. Therefore we assume that for every one-dimensional direction $\ell$ no point
of the $d$-dimensional plane $\ell^{\bot}$ has depth (with respect to the projected measure $\mu_{\ell}$)
$\tfrac{1}{d + 1} + \tfrac{1}{3(d + 1)^3}$ or greater. For brevity, we will write
$$a_0 = a_0(d + 1) = \frac{1}{d + 1} + \frac{1}{3(d + 1)^3}.$$

\subsection{Nice measures}

A Borel probability measure $\mu$ in a Euclidean $d$-space $V$ will be called {\it a nice measure} if it has a density function $f_{\mu}: V \to \mathbb R$
satisfying the following properties:
\begin{enumerate}
  \item $f_{\mu}$ is continuous.
  \item $f_{\mu}(x) > 0$ for every $x \in V$.
  \item There exist $C_1, C_2 > 0$ such that $f_{\mu}(x) < C_1e^{-C_2|x|}$ for every $x \in V$.
\end{enumerate}
We supply the space $\mathcal M(V)$ of nice measures in $V$ with a metric --- the $L^1$ distance between density functions:
$$\| \mu - \mu' \| = \| f_{\mu} - f_{\mu'} \|_{L^1} = \int\limits_{V} |f_{\mu}(x) - f_{\mu'}(x)|\, dx.$$

Let $Iso(V)$ be the group of all isometries of $V$. Then every element $F \in Iso(V)$ admits a natural push-forward $F_* : \mathcal M(V) \to \mathcal M(V)$.
Namely, we define the measure $F_*(\mu)$ via
$$F_*(\mu)(X) = \mu(F^{-1}(X) \quad \text{for every Borel set $X \subseteq V$.}$$
Recall that $Iso(V)$ has a natural topology. Every $F \in Iso(V)$ can be represented as $F(x) = Ax + v$, where $A \in O(V)$,
$v \in V$, and the convergence $F \to Id$ is equivalent to the simultaneous convergence $A \to Id$ and $v \to \mathbf 0$.

In the next proposition (Proposition~\ref{prop:nice_mes}) we collect the most important facts about nice measures that we will use in the paper.
We omit the proof, as it is plainly standard.

\begin{prop}\label{prop:nice_mes}
The following assertions hold:
\begin{enumerate}
  \item Let $\mu \in \mathcal M(\mathbb R^d)$. Then
        $$\lim\limits_{F \to Id} \| \mu - F_*(\mu) \| = 0,$$
        where $F$ runs through $Iso(\mathbb R^d)$.
  \item Let $\alpha \subset \mathbb R^d$ be a $k$-flat, where $k < d$, $\mu \in \mathcal M(\mathbb R^d)$. Then $\mu_{\alpha}$ is a nice measure.
  \item Let $\alpha \subset \mathbb R^d$ be a fixed $k$-flat. Consider $\mu_{\alpha} : \mathcal M(\mathbb R^d) \to \mathcal M(\alpha^{\bot})$
        as a function of $\mu$. Then $\mu_{\alpha}$ is continuous in $\mathcal M(\alpha^{\bot})$.
  \item Consider $\depth_{\mu}(x): \mathcal M(\mathbb R^d) \times \mathbb R^d \to \mathbb R$ as a function of $\mu$ and $x$. Then $\depth_{\mu}(x)$ is
        continuous in $\mathcal M(\mathbb R^d) \times \mathbb R^d$ (with the product topology).
\end{enumerate}
\end{prop}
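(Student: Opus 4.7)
All four assertions reduce to standard $L^1$-continuity and Fubini-type arguments combined with the exponential decay built into the definition of a nice measure, so the plan is to point out which classical tool does each job and to elaborate only on (4), which carries the only real content.

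For (1) I would use density of continuous compactly supported functions in $L^1$: given $\varepsilon > 0$, approximate $f_\mu$ by such a $g$ (tightness is supplied by the exponential bound), apply uniform continuity of $g$ to obtain $\|g - g \circ F^{-1}\|_{L^1} \to 0$ as $F \to Id$, and close by the triangle inequality together with the $L^1$-invariance of the isometry pushforward. Assertion (2) follows by writing the marginal density as $f_{\mu_\alpha}(y) = \int_{\pi_\alpha^{-1}(y)} f_\mu$, noting that continuity and strict positivity are immediate from the same properties of $f_\mu$ via dominated convergence against the exponential majorant, and deducing exponential decay of $f_{\mu_\alpha}$ from the elementary estimate $\int_{\alpha} C_1 e^{-C_2|y+u|}\,du \le C_1' e^{-C_2|y|}$, which in turn uses $|y+u| \ge |y| - |u|$ and a split of the integral. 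Assertion (3) is a one-line Fubini application giving the stronger contraction $\|\mu_\alpha - \mu'_\alpha\| \le \|\mu - \mu'\|$.

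The only assertion that deserves more comment is (4). I would decompose joint continuity into two pieces. First, continuity in $\mu$ \emph{uniformly} in $x$: for any closed half-space $H$ one has $|\mu(H) - \mu'(H)| \le \|\mu - \mu'\|$, and taking $\inf$ over half-spaces containing $x$ on both sides yields $|\depth_\mu(x) - \depth_{\mu'}(x)| \le \|\mu - \mu'\|$. Second, continuity of $x \mapsto \depth_\mu(x)$ for fixed nice $\mu$: I would parametrize closed half-spaces through $x$ by their outward unit normals $n \in S^{d-1}$, check that $(n,x) \mapsto \mu(\{y : \langle n, y - x\rangle \ge 0\})$ is continuous on $S^{d-1} \times \mathbb R^d$ (again by dominated convergence against the exponential bound), and deduce that the infimum in $n$ is attained and depends continuously on $x$. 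Combined, these two pieces give joint continuity on $\mathcal M(\mathbb R^d) \times \mathbb R^d$.

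The only place where I expect any real care is the $x$-continuity in (4): if $x_n \to x$, one must extract a limiting optimal half-space from almost-optimal half-spaces through $x_n$. Compactness of $S^{d-1}$ handles the normal direction, and the exponential decay of $f_\mu$ allows dominated convergence to carry the half-space mass through the limit uniformly. Once this is set up the rest of the proposition is bookkeeping, which is presumably why the authors classify it as plainly standard.
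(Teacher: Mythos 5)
The paper gives no proof of this proposition --- the authors explicitly omit it as ``plainly standard'' --- so there is nothing to compare against; your write-up is a correct rendering of the standard arguments and would serve as the omitted proof. One small point in assertion (2): the inequality $|y+u|\ge |y|-|u|$ by itself does not yield $\int_{\alpha}C_1e^{-C_2|y+u|}\,du\le C_1'e^{-C_2|y|}$ with the \emph{same} constant $C_2$ (the naive bound produces a divergent factor $\int e^{C_2|u|}\,du$); either use the orthogonality $y\perp u$, which gives $|y+u|\ge(|y|+|u|)/\sqrt2$, or accept a smaller decay constant $C_2'<C_2$ --- both are harmless since the definition of a nice measure only asks for the existence of some constants.
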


In the proof of Theorem~\ref{thm:main} we will assume that $\mu$ is nice. Then the case of an arbitrary $\mu$ will follow from a
standard approximation argument.

\subsection{Properties of Tukey medians}

Write
$$\mathcal M_a(V) = \{ \nu \in \mathcal M(V): \sup\limits_{x \in V} \depth_{\nu}(x) < a \}.$$
We will consider $\mathcal M_a(V)$ as a subspace of $\mathcal M(V)$ with the induced topology.
Note that the Rado Centerpoint Theorem implies $\mathcal M_a(V) = \varnothing$ for all $a \leq \tfrac{1}{d + 1}$.

Recall the notation $a_0 = \tfrac{1}{d + 1} + \tfrac{1}{3(d + 1)^3}$. In order to prove Theorem~\ref{thm:main} in $\mathbb R^{d + 1}$
by contradiction we have to assume that
$$\mu_{\ell} \in \mathcal M_{a_0}(\ell^{\bot})$$
for every line $\ell \in \mathbb RP^d$.

For $\nu \in \mathcal M(V)$ we will call a point $o \in V$ a {\it Tukey median} of $\nu$ if $\depth_{\nu}(o) = \sup\limits_{x \in V} \depth_{\nu}(x)$.
The following Lemma~\ref{lem:median} concerns the properties of Tukey medians. The idea of such statement is certainly not new,
see, for instance,~\cite{BF1984}.

\begin{lem}\label{lem:median}
Let $V$ be a Euclidean $d$-space, $a \in \left( \tfrac{1}{d + 1}, \tfrac{1}{d} \right)$. Then the following assertions hold.
\begin{enumerate}
	\item Let $\nu \in \mathcal M_a(V)$, $o$ be a Tukey median of $\nu$. Then there exists $d + 1$ half-spaces $H_1, H_2, \ldots, H_{d + 1} \subset V$
          such that $o \in \partial H_i$, $\bigcap\limits_{i = 1}^{d + 1} H_i = o$, and $\nu(H_i) = \depth_{\nu}(o)$.
	\item For every $\nu \in \mathcal M_a(V)$ the Tukey median of $\nu$ is unique.
    \item Let $o(\nu)$ denote the Tukey median of $\nu$ for every $\nu \in \mathcal M_a(V)$. Then $\depth_{\nu}(o(\nu))$ depends continuously on $\nu$.
    \item $o(\nu)$ is a continuous function of $\nu$ if $\nu$ runs through $\mathcal M_a(V)$.
\end{enumerate}
\end{lem}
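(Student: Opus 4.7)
The four parts are tightly coupled: Part 1 is the main geometric statement, Part 2 follows from it by a strict-convexity argument at the midpoint, and Parts 3--4 come out of a standard compactness/subsequence routine. For Part 1, fix a Tukey median $o$ with $d^* = \depth_\nu(o)$. The function $g(u) = \nu(H_u(o))$ on $S^{d-1}$ is continuous, so the set of critical directions $S(o) = g^{-1}(d^*)$ is a nonempty closed subset of the sphere. Finding $d+1$ half-spaces as required is equivalent to finding $u_1, \ldots, u_{d+1} \in S(o)$ positively spanning $V$, which by a Carath\'eodory-type argument follows once we show that $0$ lies in the interior of $\conv S(o)$. Assume otherwise; separation yields $v \neq 0$ with $\langle u, v \rangle \geq 0$ for all $u \in S(o)$, and we split on whether this inequality is uniformly strict.

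In the strict case $\langle u, v \rangle \geq \delta > 0$ on $S(o)$, the perturbation $o \to o + \epsilon v$ strictly raises every critical $\nu(H_u)$ by the first-order formula $\nu(H_u(o + \epsilon v)) = \nu(H_u(o)) + \epsilon \langle u, v \rangle I(u) + O(\epsilon^2)$, with $I(u) = \int_{\partial H_u(o)} f_\nu\, dS > 0$ uniformly on $S^{d-1}$ by continuity and positivity of $f_\nu$; a compactness argument on the complement of a neighbourhood of $S(o)$ lifts this to $\depth_\nu(o + \epsilon v) > d^*$, contradicting maximality. The remaining (hard) case is $S(o) \subset v^\perp$. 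Here we exploit the Rado bound in dimension $d-1$: the projection of $\nu$ along $v$ is a nice measure $\tilde\nu$ on $v^\perp \cong \mathbb R^{d-1}$ (Proposition~\ref{prop:nice_mes}(2)) and satisfies $\depth_{\tilde\nu}(\pi_v(o)) = d^*$ since all critical directions lie in $v^\perp$, whereas Rado produces some $\tilde p^* \in v^\perp$ with $\depth_{\tilde\nu}(\tilde p^*) \geq \tfrac{1}{d} > d^*$. We then analyse the quasi-concave function $\psi(s) = \depth_\nu(\tilde p^* + sv)$, whose maximum $\psi^*$ is a priori bounded by $d^*$. For every $u \in v^\perp$ the value $\nu(H_u(\tilde p^* + sv)) = \tilde\nu(\tilde H_u(\tilde p^*)) \geq \tfrac{1}{d}$ is constant in $s$, so if some critical direction at the maximiser $p^* = \tilde p^* + s^* v$ lies in $v^\perp$ we obtain $\psi^* \geq \tfrac{1}{d} > d^*$ directly; otherwise one-dimensional first-order optimality at $s^*$ forces critical $u_+, u_-$ with $\langle u_\pm, v \rangle$ of opposite signs, and a local perturbation together with the strict gap $\tfrac{1}{d} - d^* > 0$ in the $v^\perp$-directions again yields $\psi^* > d^*$. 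Either sub-case contradicts $\psi^* \leq d^*$ and completes Part 1.

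For Part 2, suppose $o_1 \neq o_2$ are distinct Tukey medians, set $v = o_2 - o_1$ and $o_m = \tfrac{1}{2}(o_1 + o_2)$. Since $f_\nu > 0$, each one-dimensional marginal CDF $\phi_u(t) = \nu(\{\langle u, x \rangle \leq t\})$ is strictly increasing, giving
\begin{equation*}
\nu(H_u(o_m)) = \phi_u\!\left(\tfrac{\langle u, o_1\rangle + \langle u, o_2\rangle}{2}\right) > \min\bigl(\phi_u(\langle u, o_1\rangle), \phi_u(\langle u, o_2\rangle)\bigr) \geq d^*
\end{equation*}
for every $u$ with $\langle u, v \rangle \neq 0$. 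Combined with $\depth_\nu(o_m) \leq d^*$, this forces the critical set at $o_m$ to be contained in $v^\perp$; but Part 1 applied to $o_m$ would then produce $d+1$ vectors in $v^\perp$ positively spanning $V$, impossible. For Parts 3 and 4, the exponential decay of $f_\nu$ confines Tukey medians to a ball $B_R$ of radius uniform over a neighbourhood of $\nu$ in $\mathcal M_a(V)$; continuity of $\depth_\mu(x)$ in $(\mu, x)$ (Proposition~\ref{prop:nice_mes}(4)) over this compact product gives that $\mu \mapsto \sup_x \depth_\mu(x)$ is continuous, which is Part 3, and Part 4 follows from Part 2 by a standard subsequence argument: any accumulation point of $o(\nu_n)$ as $\nu_n \to \nu$ is a Tukey median of $\nu$ by continuity, hence equals $o(\nu)$ by uniqueness.

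The main obstacle is the hard sub-case of Part 1 where $S(o) \subset v^\perp$: although the gap $\tfrac{1}{d} - d^* > 0$ afforded by the Rado bound in dimension $d - 1$ is clearly the right lever, turning it into a contradiction requires a careful analysis along the lifted fibre $\tilde p^* + \mathbb R v$ and of the interplay between perpendicular and non-perpendicular critical directions at its maximiser. All other steps --- the first-order perturbation in the easy sub-case, the midpoint argument for Part 2, and the compactness arguments for Parts 3--4 --- are by comparison routine.
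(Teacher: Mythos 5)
There is a genuine gap in your Part 1, at the step where you pass from ``$\mathbf 0$ lies in the interior of $\conv S(o)$'' to ``there exist $d+1$ vectors of $S(o)$ positively spanning $V$ by a Carath\'eodory-type argument.'' This implication is false as a general geometric fact: for the compact set $S=\{\pm e_1,\ldots,\pm e_d\}\subset\mathbb R^d$ the origin is interior to $\conv S$, yet no $(d+1)$-element subset positively spans ($d\geq 2$). Carath\'eodory only gives affinely independent $n_1,\ldots,n_k\in S(o)$ with $\mathbf 0\in\relint\conv\{n_1,\ldots,n_k\}$ for \emph{some} $k\leq d+1$, and nothing in your argument rules out $k\leq d$. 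The missing ingredient --- and the place where the hypothesis $a<\tfrac1d$ must enter --- is a measure-counting step: since $\bigcap_{i=1}^k\{x:\langle x,n_i\rangle\geq 0\}$ is a proper linear subspace (hence $\nu$-null for a nice measure), the critical half-spaces $H(n_1),\ldots,H(n_k)$ cover $V$ up to a null set, so $k\,\depth_\nu(o)\geq 1$; as each has measure $<a<\tfrac1d$ this forces $k\geq d+1$, hence $k=d+1$, and the $d+1$ affinely independent normals with $\mathbf 0$ in the relative interior of their hull are exactly the required positively spanning tuple. This is the route the paper takes, and note that it only needs $\mathbf 0\in\conv S(o)$, not $\mathbf 0\in\relint\conv S(o)$.

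A consequence is that your ``hard case'' $S(o)\subset v^{\perp}$ never arises: if $\mathbf 0\notin\conv S(o)$, strict separation of a point from a compact convex set always yields a uniformly strict inequality $\langle u,v\rangle\geq\delta>0$ on $S(o)$, which is your easy case (and matches the paper's perturbation argument). The hard case appears only because you aim for the interior condition, which, as above, is neither necessary nor sufficient. Moreover, that sub-case is not actually closed: after locating the maximiser $p^*=\tilde p^*+s^*v$ and deducing the existence of critical directions $u_+,u_-$ with $\langle u_\pm,v\rangle$ of opposite signs, the assertion that ``a local perturbation together with the strict gap $\tfrac1d-d^*$ yields $\psi^*>d^*$'' is not substantiated --- no perturbation direction is exhibited that simultaneously increases $\nu(H_{u_+})$ and $\nu(H_{u_-})$, and I do not see how the stated facts produce a contradiction. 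Your Parts 2--4 are fine, though they take slightly different (valid) routes from the paper: the paper proves uniqueness directly by sliding one critical half-space of $o_1$ past $o_2$ rather than via the midpoint, gets Part 3 from the $1$-Lipschitz inequality $|\depth_{\nu'}(o')-\depth_\nu(o)|\leq\|\nu-\nu'\|$ rather than compactness, and proves Part 4 by an explicit simplex of slightly enlarged half-spaces rather than a subsequence argument; but all of Parts 2--4 rest on Part 1, so the gap there must be repaired first.
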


The proof will be given in Section~\ref{sect:centerpoint}.

Define
$$\mathcal M_a^{\circ}(V) = \{ \nu \in \mathcal M_a(V): o(\nu) = \mathbf 0 \}.$$
I.e., $\mathcal M_a^{\circ}(V)$ contains all those measures in $\mathcal M_a(V)$ whose Tukey median is the origin. By Lemma~\ref{lem:median},
assertion 2, for every $\nu \in \mathcal M_a(V)$ there exists a unique translation $F$ such that the translated measure $\nu^{\circ} = F_*(\nu)$
belongs to $\mathcal M_a^{\circ}(V)$ (namely, $F$ is the translation by $-o(\nu)$).

Lemma~\ref{lem:median}, assertion 4, and Proposition~\ref{prop:nice_mes}, assertions 1 and 4, imply that $\nu^{\circ}$ is a continuous
function of $\nu$.

\subsection{Geometry of measures in $\mathcal M_{a_0}^{\circ}(V)$}

Let $V$ be a Euclidean $d$-space. Denote by $\mathcal T(V)$ the set of all unordered $(d + 1)$-tuples $\{ e_1, e_2, \ldots, e_{d + 1} \}$,
$e_i \in V$ such that
$$\dim \conv \{ e_1, e_2, \ldots, e_{d + 1} \} = d; \quad \mathbf 0 \in \relint \conv \{ e_1, e_2, \ldots, e_{d + 1} \}.$$
Of course, $\mathcal T(V)$ can be considered as a topological space with the topology induced from $V^{d + 1} / \mathfrak S_{d + 1}$, where
$\mathfrak S_{d + 1}$ is the symmetric group with the usual action on the $(d + 1)$-th power of $V$.

The main geometric statement of the paper is provided below in Lemma~\ref{lem:structure}. We say that an isometry $F: V_1 \to V_2$ between
two a Euclidean $d$-spaces is {\it linear} if it maps the origin of $V_1$ to the origin of $V_2$. Every such isometry naturally defines
a push-forward map $F_* : M_a^{\circ}(V_1) \to \mathcal M_a^{\circ}(V_2)$ and the map $F: \mathcal T(V_1) \to \mathcal T(V_2)$ resembling the
usual notation:
$$F(\{ e_1, e_2, \ldots, e_{d + 1} \}) = \{ F(e_1), F(e_2), \ldots, F(e_{d + 1}) \}.$$

\begin{lem}[Structural Lemma]\label{lem:structure}
Let $a \in \left( \tfrac{1}{d + 1}, a_0 \right)$. Then for every Euclidean $d$-space $V$ one can define a continuous map
$$T^V_a : \mathcal M_a^{\circ}(V) \to \mathcal T(V)$$
such that
\begin{enumerate}
  \item $T^V_a$ is continuous.
  \item For any two Euclidean $d$-spaces $V_1$ and $V_2$ and any linear isometry $F : V_1 \to V_2$ the following diagram is commutative:
  $$
    \begin{tikzcd}
        \mathcal M_a^{\circ}(V_1) \arrow{r}{F_*} \arrow[swap]{d}{T^{V_1}_a} & \mathcal M_a^{\circ}(V_2) \arrow{d}{T^{V_2}_a} \\
        \mathcal T(V_1) \arrow{r}{F} & \mathcal T(V_2)
    \end{tikzcd}.
  $$
\end{enumerate}
\end{lem}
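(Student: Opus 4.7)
The strategy is to extract from each $\nu \in \mathcal M_a^{\circ}(V)$ a canonical $(d+1)$-tuple of ``tight'' unit vectors realizing the Tukey depth at $\mathbf 0$, and then to turn them (or data built from them) into the $e_i$'s. Since $\nu$ is nice, the function $D_{\nu} : S^{d-1} \to [0,1]$ defined by $D_{\nu}(u) = \nu(\{x \in V : \langle x, u \rangle \leq 0\})$ is smooth, and by Lemma~\ref{lem:median}(1) its minimum value $a^{*}(\nu) = \depth_{\nu}(\mathbf 0) < a_0$ is attained on a closed subset $\Sigma_{\nu} \subset S^{d-1}$ that contains $d+1$ positively spanning directions. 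This $\Sigma_{\nu}$ is the raw material for the construction.

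The heart of the proof, and the main obstacle, is to show that when $a^{*}(\nu) < a_0 = \tfrac{1}{d+1} + \tfrac{1}{3(d+1)^3}$ the set $\Sigma_{\nu}$ consists of exactly $d+1$ non-degenerate local minima $u_1(\nu), \ldots, u_{d+1}(\nu)$ of $D_{\nu}$, and that these automatically positively span $V$. The rough idea is that if one had $n \geq d+2$ tight half-spaces $H_1, \ldots, H_n$ with $\bigcap H_i = \{\mathbf 0\}$, then the open complements cover $V$ up to $\nu$-measure zero, and inclusion--exclusion forces the pairwise overlaps $H_i^c \cap H_j^c$ to carry a total $\nu$-mass that is incompatible with all $\nu(H_i) = a^{*}$ being simultaneously as small as $\tfrac{1}{d+1} + O(\tfrac{1}{(d+1)^3})$. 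The specific slack $\tfrac{1}{3(d+1)^3}$ is engineered precisely so that $n = d+2$ is already impossible, and an analogous local argument shows each $u_i$ is an isolated, non-degenerate minimum: a coalescing pair of minima effectively plays the role of the extra tight direction and re-enters the forbidden regime.

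With the canonical tuple $\bigl(u_1(\nu), \ldots, u_{d+1}(\nu)\bigr)$ in hand, I would define $T^V_a(\nu) = \{e_1(\nu), \ldots, e_{d+1}(\nu)\}$ by letting $e_i(\nu)$ be the barycenter of $\nu$ restricted and renormalized to the open half-space $\{x : \langle x, u_i(\nu) \rangle > 0\}$. Since these half-spaces carry $\nu$-mass $1 - a^{*} \approx \tfrac{d}{d+1}$ each and their union is all of $V \setminus \{\mathbf 0\}$ with only thin pairwise overlap, $e_i(\nu)$ sits strictly on the $u_i$-positive side of $u_i^{\bot}$ for each $i$, and a short convex-geometric estimate places $\mathbf 0$ in the interior of $\conv\{e_i\}$ and shows the tuple is affinely independent, so it belongs to $\mathcal T(V)$. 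Continuity of $T^V_a$ then reduces to continuity of the $u_i$ in $\nu$ --- an implicit function theorem at the non-degenerate critical points combined with continuity of $D_{\nu}$ in $\nu$ derived from Proposition~\ref{prop:nice_mes} --- together with the routine continuity of restricting and taking barycenters of nice measures. The equivariance diagram commutes by construction: $D_{\nu}$, its minimum set, and the induced barycenters are defined intrinsically from $\nu$ and the Euclidean structure, so for any linear isometry $F : V_1 \to V_2$ the push-forward $F_{*}(\nu)$ satisfies $u_i(F_{*}(\nu)) = F(u_i(\nu))$ and hence $e_i(F_{*}(\nu)) = F(e_i(\nu))$.

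The principal difficulty is thus concentrated in the second step: pinning down the cardinality and non-degeneracy of $\Sigma_{\nu}$ from the narrow quantitative bound $a^{*} < a_0$. This presumably requires a delicate Radon-style inclusion--exclusion or area estimate on a hypothetical $(d+2)$-tuple of tight normals, and is almost certainly the content that the paper's Sections~5--9 unpack; once the structure of $\Sigma_{\nu}$ is fixed, every subsequent ingredient --- the definition of the $e_i$, continuity, and equivariance --- is essentially routine.
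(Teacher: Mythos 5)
Your reduction of everything to the claim that the minimizing set $\Sigma_{\nu}$ consists of exactly $d+1$ isolated, non-degenerate minima of $D_{\nu}$ is where the proposal breaks down, and this is not a gap that a sharper inclusion--exclusion on $(d+2)$-tuples would fill: the claim is false for measures in $\mathcal M^{\circ}_{a}(V)$. Take $d = 2$ and a nice measure consisting of three blobs of mass $\approx \tfrac13$ concentrated near the vertices of an equilateral triangle centered at $\mathbf 0$, plus a tiny rotationally symmetric background (to keep the density positive). The background contributes exactly half its mass to every half-plane through the origin, so $\nu(H(n))$ is \emph{constant}, equal to the depth, on each of the three arcs of directions $n$ for which $H(n)$ contains one blob and excludes the other two. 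Thus $\Sigma_{\nu}$ is a union of arcs, no implicit-function-theorem selection of $u_1,\ldots,u_{d+1}$ exists, and any rule that picks a single tight tuple per measure is discontinuous at measures where the choice is ambiguous. What the bound $a < a_0$ actually yields (the paper's Bijection Lemma, Lemma~\ref{lem:main}) is weaker but sufficient: \emph{all} generating $(d+1)$-tuples of weight $< a$ --- and there may be a whole compact family $\mathcal R_{\nu}(a)$ of them --- have corresponding simplicial cones that pair up under a canonical matching with large mutual overlap, so the family can be consistently and continuously \emph{labelled} even though no single member can be continuously \emph{selected}.

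The paper then sidesteps selection altogether. For each ordered tuple of unit normals $[n_1,\ldots,n_{d+1}]$ it sets $e_i(\nu; n_1,\ldots,n_{d+1}) = \bigl(a - \wgt_{\nu}(H_1,\ldots,H_{d+1})\bigr)\, e(B_i;\nu)$ when the tuple belongs to the labelled family $\mathcal R^{*}_{\nu}(a)$ and $\mathbf 0$ otherwise, and defines $e_i(\nu)$ by \emph{integrating} this over $(\mathbb S^{d-1})^{d+1}$. The weight factor vanishes continuously at the boundary of the family; all the central vectors $e(B_i;\nu)$ arising in the integral lie in a single cone $\relint B_i$ by Lemma~\ref{lem:central-cone}, so the average is nonzero and Lemma~\ref{lem:interior} places $\mathbf 0$ in the interior of the resulting simplex; continuity in $\nu$ follows from bounded convergence. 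This averaging device is the essential idea your proposal is missing; without it (or an equivalent substitute) a continuous $T^{V}_{a}$ cannot be extracted from your construction. Your equivariance argument and the final convex-position step are fine in spirit, but they rest on the unsupported structural claim about $\Sigma_{\nu}$.
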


The intuition behind the Structural Lemma can be roughly explained considering, in some sense, a ``typical'' representative
of $\mathcal M_a^{\circ}(V)$ for some $a \in \left( \tfrac{1}{d + 1}, a_0 \right)$. Let $\{ e_1, e_2, \ldots, e_{d + 1} \} \in \mathcal T(V)$, and let
$$\nu = \frac{1}{d + 1}(\nu_1 + \nu_2 + \ldots + \nu_{d + 1}),$$
where $\nu_i$ is a nice measure sharply concentrated around $e_i$. (We also require $o(\nu) = \mathbf 0$, but this can also be
settled by the particular choice of $\nu_i$.) It is not hard to check that $\depth_{\nu}(0)$ is close to $\tfrac{1}{d + 1}$,
so, in particular, $\nu \in \mathcal M_a^{\circ}(V)$. If we were restricted only to this type of measures, then it would
have been natural to put
$$T^V_a(\nu) = \{ e_1, e_2, \ldots, e_{d + 1} \}.$$

Our goal will be to formalize this intuition showing that every $\nu \in \mathcal M_a^{\circ}(V)$ behaves, in a certain sense,
similarly to the described measures.

\section{Topological part}\label{subs:topol-overview}

We write $\mathbb RP^d$ for the space of all one-dimensional directions in $\mathbb R^{d + 1}$ as this is indeed the real projective space.

Let $\xi = (E, \mathbb RP^d, p)$ be the tautological quotient bundle~\cite[\S 2.2.3]{3264} over $\mathbb RP^d$. I.e., the total space $E$ can be written
as a quotient space
$$E = \{ (u, v) : u \in \mathbb S^d, v \in \mathbb R^{d + 1}, \langle u, v \rangle = 0 \} / \sim,$$
where the equivalence relation $\sim$ is defined by $(u, v) \sim (-u, v)$, and the projection $p: E \to \mathbb RP^d$ is as follows:
$$p(u, v) = \ell \Leftrightarrow \ell \parallel u.$$

There is a natural way to identify the fiber $p^{-1}(\ell)$ and the hyperplane $\ell^{\bot}$: a point $(u, v) \sim (-u, v)$, where $u \parallel \ell$
is identified with the point $v \in \ell^{\bot}$. (The last inclusion is due to the property $\langle u, v \rangle = 0$.)

Let us state and prove the key lemma of the topological part. After presenting the formal proof we will also give its less formal
(but also non-rigorous) interpretation.

The term {\it $k$-fold covering of $\mathbb RP^d$} will refer to a projection $p: X \to \mathbb RP^d$ appearing in the common definition
of a covering space (see, for example,~\cite[\S 1.3]{Hat}), and $k$ denotes the cardinality of each set $p^{-1}(\ell)$, where $\ell$ runs through
$\mathbb RP^d$.

\begin{lem}\label{lem:topological}
Let $d \geq 2$, $\xi = (E, \mathbb RP^d, p)$ be the tautological quotient bundle as above. Then there is no
space $E' \subset E$ such that
\begin{enumerate}
	\item The projection $p \mid_{E'}$ is a $(d + 1)$-fold covering of $\mathbb RP^d$ by $E'$.
	\item For every $\ell \in \mathbb RP^d$ one has $E' \cap p^{-1}(\ell) \in \mathcal T(p^{-1}(\ell))$.
\end{enumerate}
\end{lem}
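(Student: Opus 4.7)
The plan is to compute the total Stiefel--Whitney class of $E$ in two incompatible ways, using the hypothetical $E'$. Write $\gamma$ for the tautological line bundle over $\mathbb{RP}^d$ (so that $\gamma \oplus E$ is the trivial bundle of rank $d+1$), let $\alpha = w_1(\gamma)$ generate $H^*(\mathbb{RP}^d;\mathbb{Z}/2) = \mathbb{Z}/2[\alpha]/(\alpha^{d+1})$, and let $\underline{\mathbb{R}}$ denote the trivial real line bundle.

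Since $\pi_1(\mathbb{RP}^d) = \mathbb{Z}/2$, each connected component of $E'$ is either a copy of $\mathbb{RP}^d$ (a section of the cover) or a copy of $S^d$ (the universal double cover); let $a$ and $b$ denote their numbers, so $a + 2b = d+1$. I would form the rank-$(d+1)$ ``permutation'' vector bundle $P$ on $\mathbb{RP}^d$ whose fiber $P_\ell$ is the free real vector space $\mathbb{R}^{E' \cap p^{-1}(\ell)}$ on the cover's fiber points. This is the flat bundle associated to the permutation representation of $\pi_1$ on the fiber, which decomposes into $a+b$ copies of the trivial representation and $b$ copies of the sign representation of $\mathbb{Z}/2$ (each fixed point contributes a trivial summand, each transposition contributes the regular representation $\cong \mathrm{triv}\oplus\mathrm{sign}$). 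Since the associated line bundles are $\underline{\mathbb{R}}$ and $\gamma$ respectively, we obtain $P \cong (a+b)\underline{\mathbb{R}} \oplus b\gamma$.

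Next, I would build the ``sum'' bundle map $\Sigma\colon P \to E$ by $\Sigma(\sum_v c_v[v]) = \sum_v c_v v$, where on the right each $v \in E' \cap p^{-1}(\ell)$ is viewed as a vector in $\ell^\perp$. Any $d$ of the $d+1$ fiber points are linearly independent in $\ell^\perp$ (a direct consequence of $\mathbf{0} \in \relint\conv$), so $\Sigma$ is fiberwise surjective; its kernel $K$ is a line bundle. The crucial observation is that $K$ is trivial: the barycentric coordinates of the origin in the fiber simplex---the unique positive $(\lambda_v)_v$ with $\sum_v \lambda_v = 1$ and $\sum_v \lambda_v v = 0$---depend continuously on $\ell$, and $\sum_v \lambda_v[v] \in P_\ell$ is a nowhere-zero section of $K$. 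Hence $0 \to K \to P \to E \to 0$ splits as $P \cong \underline{\mathbb{R}} \oplus E$.

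By the Whitney sum formula, $w(E) = w(P) = w(\gamma)^b = (1+\alpha)^b$. On the other hand, $\gamma \oplus E \cong \underline{\mathbb{R}}^{d+1}$ forces $w(E) = w(\gamma)^{-1} = (1+\alpha)^{-1}$, so $(1+\alpha)^{b+1} = 1$ in $\mathbb{Z}/2[\alpha]/(\alpha^{d+1})$. But $b \leq (d+1)/2$ gives $b+1 \leq d$ for every $d \geq 2$, so $\alpha^{b+1}$ is nonzero in the quotient ring; yet the coefficient of $\alpha^{b+1}$ in $(1+\alpha)^{b+1}$ equals $\binom{b+1}{b+1} = 1 \not\equiv 0 \pmod 2$, contradicting $(1+\alpha)^{b+1}=1$. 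The conceptual step to get right is the canonical trivialization of $K$ via the barycentric coordinates of the origin; everything else is a mechanical Stiefel--Whitney computation.
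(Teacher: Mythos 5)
Your proof is correct, but it follows a genuinely different route from the paper's. The paper also starts by using $\pi_1(\mathbb RP^d)=\mathbb Z_2$ to split $E'$ into $1$-fold and $2$-fold components, but it then extracts from a \emph{single} component a nowhere-vanishing section of $\xi$ (the component itself if it is a section; the fiberwise sum $g_1+g_2$ if it is a double cover, which cannot vanish because two vertices of a simplex with $\mathbf 0$ in its interior cannot sum to zero when $d\ge 2$), and concludes from the nonvanishing of the top Stiefel--Whitney class $w_d(\xi)=x^d$, i.e.\ from the mod-$2$ Euler-class obstruction to a nowhere-zero section. You instead use all $d+1$ fiber points simultaneously: the permutation bundle $P\cong(a+b)\underline{\mathbb R}\oplus b\gamma$, the fiberwise surjective sum map $P\to E$ (surjectivity correctly reduced to the linear independence of any $d$ of the vertices, which does follow from $\mathbf 0\in\relint\conv$), and the canonical trivialization of its kernel by the barycentric coordinates of the origin, giving $\underline{\mathbb R}\oplus\xi\cong(a+b)\underline{\mathbb R}\oplus b\gamma$ and hence $(1+x)^{b+1}=1$, which is false since $1\le b+1\le\lfloor(d+1)/2\rfloor+1\le d$. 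What each buys: the paper's argument is shorter and needs only the top class plus the standard obstruction-theoretic fact (cited from Milnor--Stasheff) that a nonzero top class forbids a nonvanishing section; yours replaces that input by a self-contained Whitney-sum computation, at the cost of a more elaborate construction, and it uses the full simplex hypothesis where the paper uses only one or two vertices per component. Both rest on the same computation $w(\xi)=(1+x)^{-1}=1+x+\cdots+x^d$. One small point of care: for $d=2$ the inequality $b+1\le d$ needs the integrality of $b$ (so $b\le\lfloor 3/2\rfloor=1$), not just $b\le(d+1)/2$.
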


\begin{proof}
Suppose that such $E'$ exists. Since $\pi_1(\mathbb RP^d) = \mathbb Z_2$, then $E'$ splits into 1-fold and 2-fold subcovers.

We will show then that in each case $\xi$ admits a non-vanishing section. That is, there exists $F \subset E$ such that
$p \mid_F$ is a homeomorphism from $F$ to $\mathbb RP^d$, and for every $\ell \in \mathbb RP^d$ the point $F \cap p^{-1}(\ell)$ is
not the origin of the fiber $p^{-1}(\ell)$.

If there is a 1-fold subcover $F \subset E$, then it is a non-vanishing section itself.
If there is a 2-fold subcover $G \subset E'$, then for each $\ell \in \mathbb RP^d$
define
$$f(\ell) = g_1 + g_2, \qquad \text{where $\{g_1, g_2\} = G \cap p^{-1}(\ell)$.}$$
(The sum of vectors is well defined in the fiber $p^{-1}(\ell)$.) Put
$$F = \{f(\ell) : \ell \in \mathbb RP^d \}.$$
Since $d \geq 2$, and $g_1$ and $g_2$ are two vertices of the simplex $S(\ell)$, which contains the origin inside, the sum $g_1 + g_2$
cannot vanish. Hence $F$ is a non-vanishing section.

Consider the cohomology ring $H^*(\mathbb RP^d, \mathbb Z_2)$. We have
$$H^*(\mathbb RP^d, \mathbb Z_2) = \mathbb Z_2[x] / (x^{d + 1}),$$
where $x \in H^1(\mathbb RP^d, \mathbb Z_2)$ (see~\cite[Lemma 4.3]{MSt}).
If $sw(\xi) \in H^*(\mathbb RP^d, \mathbb Z_2)$ is the Stiefel-Whitney class of the bundle $\xi$, then we have
$sw(\xi) = 1 + x + \ldots + x^d$ (see~\cite[\S 4, Example 3]{MSt}).

Hence the top ($d$-th) Stiefel-Whitney class of $\xi$ is non-zero, and, consequently, $\xi$ cannot have a non-vanishing section
(see~\cite[\S~12]{MSt} and references therein). A contradiction finishes the proof.

\end{proof}

\noindent {\it Remark (non-rigorous ``proof'' of Lemma~\ref{lem:topological}).}
We reduce the lemma to non-existence of a non-vanishing section of $\xi$ just as above. Assume, for a contradiction,
that $\xi$ has a non-vanishing section. Equivalently, there exists an {\it even} non-vanishing tangent vector field $v$ for
the sphere $\mathbb S^{d}$ (i.e., $v(-x) = v(x)$).

Let us also allow $v$ to have isolated finite-index singularities. An example
is the field $v_0$ of unit vectors pointing nothwards. The singularities of $v_0$ are the north and the south poles.
One can see that the index of $v_0$ at the poles is $\pm 1$. Hence the points with odd index split into an odd
number of pairs (each pair consists of two mutually antipodal points). We claim that this property holds not only for $v_0$,
but also for all possible vector fields.

Let $\mathcal C$ be a centrally symmetric simplicial subdivision of $\mathbb S^d$ isomorphic to the $(d + 1)$-crosspolytope
such that the poles of $\mathbb S^d$ do not belong to the $(d - 1)$-skeleton of $\mathcal C$.

Let $\Sigma$ be a simplex of $\mathcal C$, and $u_{\Sigma}$ be a vector field on $\partial \Sigma$. We assume only that
$u_{\Sigma}$ is tangent to $\mathbb S^d$ and has no singularities. Then all continuations of $u_{\Sigma}$ onto $\Sigma$
have the same sum of indices over all singularities. Denote this sum by $s(\Sigma, u_{\Sigma})$.

If a vector field $u$ is defined on $sk_{d - 1}(\mathcal C)$, we write
$$s(u) = \sum\limits_{(\Sigma, -\Sigma)} s(\Sigma, u \mid_{\partial \Sigma}).$$
(In the summation above we account for each pair $(\Sigma, -\Sigma)$ of antipodal simplices exactly once; the order in which the
pair is accounted for does not affect the parity of $s(u)$, because $s(\Sigma, u \mid_{\partial \Sigma}) \equiv s(-\Sigma, u \mid_{\partial (-\Sigma)}) \pmod 2$
for every even field $u$.) We want to prove that $s(u)$ has to be odd.

We can assume that $u$ coincides with $v_0$ on $sk_{d - 2}(\mathcal C)$; this can be done by a continuous perturbation of $u$.

Let us notice that, whenever a change of $u$ affects only a pair of antipodal $(d - 1)$-faces of $\mathcal C$, the parity of
$s(u)$ remains unchanged. Indeed, if the simplices $\Sigma$ and $\Sigma'$ share one of the chosen faces, then
the two terms,
$$s(\Sigma, u \mid_{\partial \Sigma})\quad \text{and} \quad s(\Sigma', u \mid_{\partial \Sigma'})$$
may either change or not change their parity simultaneously, and all the other terms remain unaffected. Hence, identifying $u$ with
$v_0$ face-by-face, we indeed get
$$s(u) \equiv s(v_0 \mid_{sk_{d - 1}(\mathcal C)}) \equiv 1 \pmod 2.$$
\qed

\noindent {\it Remark.}
The present version of Lemma~\ref{lem:topological} was suggested by R.~Karasev and works for every dimension.
For all dimensions, except $d = 3$ and $d = 7$, one can consider the tangent bundle of $\mathbb S^d$
with an embedded $(d + 1)$-fold cover. This gives $d + 1$ affinely independent vector fields in $\mathbb S^d$
(and therefore $d$ linearly independent vector fields), which is also impossible.

\section{Deduction of Theorem~\ref{thm:main} from Lemmas~\ref{lem:median}, \ref{lem:structure} and \ref{lem:topological}}\label{subs:proof}

Let $\mu$ be a nice measure in $\mathbb R^{d + 1}$. If $\xi = (E, \mathbb RP^d, p)$ is the tautological quotient bundle over $\mathbb RP^d$
as in previous section, we can think of a projected measure $\mu_{\ell}$ as a measure in $p^{-1}(\ell)$.

We will argue by contradiction. If Theorem~\ref{thm:main} is false, then there exists a measure $\mu \in \mathcal M(\mathbb R^{d + 1})$
such that for every $\ell \in \mathbb RP^d$ one has $\mu_{\ell} \in \mathcal M_{a_0}(p^{-1}(\ell))$.

Let us introduce the notation that we will refer to as {\it the local picture at $\ell_0$}.

Let $\ell_0$ be an arbitrary point of $\mathbb RP^d$. Then we can choose a neighborhood $U \subset \mathbb RP^d$ of $\ell_0$ and a homeomorphism
$$\phi : p^{-1}(U) \to U \times V,$$
where $V$ is a Euclidean $d$-space, such that the restriction $\phi\mid_{p^{-1}(\ell)}$ is a linear isometry of the spaces $p^{-1}(\ell)$ and $\{ \ell \} \times V$.
(We set the origin of $\{ \ell \} \times V$ to be the point $(\ell, \mathbf 0)$, where $\mathbf 0$ is the origin of $V$.)
Let $\pi : U \times V \to V$ be a projection preserving the $V$-component.

In the local picture at $\ell_0$ define
$$\nu_\ell = (\pi \circ \phi\mid_{p^{-1}(\ell)} )_*(\mu_{\ell}).$$
It is clear that $\nu(\ell) \in \mathcal M_{a_0}(V)$. Also, assertions 1--3 of Proposition~\ref{prop:nice_mes} imply that $\nu_\ell$
depends continuously on $\ell$. By Lemma~\ref{lem:median}, the Tukey median of $\nu_\ell$ is unique and continuous, so the measure $(\nu_\ell)^{\circ}$
depends continuously on $\ell$ as well. Finally, it is evident that
$$(\nu_\ell)^{\circ} = (\pi \circ \phi\mid_{p^{-1}(\ell)} )_*(\mu^{\circ}_{\ell}).$$

Write $a(\ell) = \depth_{(\mu_{\ell})^{\circ}}(\mathbf 0_{p^{-1}(\ell)})$. In the local picture at $\ell_0$ we have
$$a(\ell) = \depth_{(\nu_{\ell})^{\circ}}(\mathbf 0),$$
where $\mathbf 0$ is the origin of $V$. Thus, by Lemma~\ref{lem:median} and the continuous dependence of $\nu_{\ell}$ on $\ell$,
we conclude that $a(\ell)$ depends continuously on $\ell$ in $U$. In particular, $a(\ell)$ is continuous at the point $\ell = \ell_0$.
But $\ell_0$ is arbitrary, hence the continuity of $a(\ell)$ in the entire $\mathbb RP^d$ follows.

By compactness of $\mathbb RP^d$, the function $a(\ell)$ attains its maximum, so
$\sup\limits_{\ell \in \mathbb RP^d} a(\ell) < a_0$. Therefore one can choose $a_1 < a_0$ such that
$(\mu_{\ell})^{\circ} \in \mathcal M_{a_1}^{\circ}(p^{-1}(\ell))$ for every $\ell \in \mathbb RP^d$.

Define a space $E' \subset E$, using Lemma~\ref{lem:structure}, as follows:
$$E' = \bigcup \limits_{\ell \in \mathbb RP^d} T^{p^{-1}(\ell)}_{a_1}((\mu_{\ell})^{\circ}).$$
(Recall that we use the notation $\xi = (E, \mathbb RP^d, p)$ for the tautological quotient bundle over $\mathbb RP^d$.)

We claim that the $(d + 1)$-tuple of points $E' \cap p^{-1}(\ell)$ ($ = T^{p^{-1}(\ell)}_{a_1}((\mu_{\ell})^{\circ})$) depends continuously on $\ell$.

Consider the local picture at $\ell_0$. Since $\phi$ is a homeomorphism and $\ell_0$ is arbitrary, it will be sufficient to prove that
$$\phi\left( T^{p^{-1}(\ell)}_{a_1}((\mu_{\ell})^{\circ}) \right)$$
depends continuously on $\ell$ at $\ell = \ell_0$. Since the $U$-coordinate of all points in the above expression is $\ell$ (and so depends continuously on $\ell$),
we can ignore it. Thus, effectively, we need to prove the continuity of
$$(\pi \circ \phi)\left( T^{p^{-1}(\ell)}_{a_1}((\mu_{\ell})^{\circ}) \right).$$

By condition 2 of Lemma~\ref{lem:structure}, we have
\begin{multline*}
(\pi \circ \phi)\left( T^{p^{-1}(\ell)}_{a_1}((\mu_{\ell})^{\circ}) \right) = (\pi \circ \phi\mid_{p^{-1}(\ell)}) \left( T^{p^{-1}(\ell)}_{a_1}((\mu_{\ell})^{\circ}) \right) = \\
T^V_{a_1}((\pi \circ \phi\mid_{p^{-1}(\ell)})_*(\mu_{\ell})^{\circ})) = T^V_{a_1}(\nu_{\ell}).
\end{multline*}
But $\nu_{\ell}$ depends continuously on $\ell$, and, by condition 1 of Lemma~\ref{lem:structure}, $T^V_{a_1}$ is continuous.
Therefore $T^V_{a_1}(\nu_{\ell})$ indeed depends continuously on $\ell$. The claim is proved.

Consequently, the projection $p \mid_{E'}$ is a $(d + 1)$-fold covering of $\mathbb RP^d$ by $E'$. Moreover, by construction,
$E' \cap p^{-1}(\ell) \in \mathcal T(p^{-1}(\ell))$. Hence $E'$ satisfies the conditions of Lemma~\ref{lem:topological}, which
is impossible. The contradiction finishes the proof of Theorem~\ref{thm:main}.

\section{The Tukey median of a measure}\label{sect:centerpoint}

This entire section is devoted to the proof of Lemma~\ref{lem:median}.

If $n \in \mathbb R^d$ is a unit vector, denote by $H(n)$ the half-space such that the origin $\mathbf 0$ belongs to $\partial H(n)$
and $n$ is the outer normal to $\partial H(n)$, i.e., $n$ is orthogonal to  $\partial H(n)$ and is directed outwards the half-space $H(n)$.

\noindent {\bf Assertion 1.} Without loss of generality assume that $o$ coincides with the origin $\mathbf 0$.

Define $\mathcal N$ to be the set of all unit vectors $n \in \mathbb R^d$ such that $\nu(H(n)) = \depth_{\nu}(\mathbf 0)$. Clearly, the set
$\mathcal N$ is compact.

Assume that $\mathbf 0 \notin \conv \mathcal N$. Then $\conv \mathcal N$ can be separated from
$\mathbf 0$ by a hyperplane. Or, equivalently, there exists a unit vector $v$ such that
$$\inf\limits_{n \in \mathcal N} \left\langle n, v \right\rangle > 0.$$

This is impossible, since for small enough $\delta > 0$ we have $\depth(\delta \cdot v) > a$. Indeed,
if we translate $\nu$ by a vector $-\delta \cdot v$, then $\depth(\mathbf 0)$ increases. The reason is that the translation of $\nu$
by $-\delta \cdot v$ increases the measure of each $H(n)$ for all $n$ in some open neighborhood of $\mathcal N$ and does not sufficiently
decrease the measure of all other half-spaces with $\mathbf 0$ in the boundary. The contradiction shows that
$\mathbf 0 \in \conv \mathcal N$.

The Carath\'eodory Theorem implies that
$$\mathbf 0 \in \conv \{ n_1, n_2, \ldots, n_k \} \qquad \text{($n_i \in \mathcal N$, $2 \leq k \leq d + 1$).}$$
By the choice of $n_i$ we have
$$\nu \left( \bigcap\limits_{i = 1}^{k} H(-n_i) \right) = 0,$$
because the intersection is a $(d + 1 - k)$-dimensional affine plane through $\mathbf 0$. Equivalently,
$$\nu \left( \bigcup\limits_{i = 1}^{k} H(n_i) \right) = 1.$$
But $\nu(H(n_i)) = 1 - \nu(H(-n_i)) = \depth_{\nu}(\mathbf 0) < a$, so
$$ka > \sum\limits_{i = 1}^k \nu(H(n_i)) \geq \nu \left( \bigcup\limits_{i = 1}^{k} H(n_i) \right) = 1.$$
Thus $k > \tfrac{1}{a} > d$. This leaves the only option $k = d + 1$.

Therefore we have obtained a $(d + 1)$-tuple of half-spaces
$$(H(-n_1), H(-n_2), \ldots, H(-n_{d + 1}))$$
satisfying the requirements of Assertion 1.

\noindent {\bf Assertion 2.}  Without loss of generality assume that $\mathbf 0$ and $o \neq \mathbf 0$ are two different Tukey medians of $\nu$.
Since $\mathbf 0$ is a Tukey median of $\nu$, we can choose a $(d + 1)$-tuple of half-spaces
$$(H(-n_1), H(-n_2), \ldots, H(-n_{d + 1}))$$
as in the proof of Assertion 1.
Then for some $i$ we have $o \in \relint H(n_i)$.

Consider the half-space $H$ such that
$$o \in \partial H, \quad \partial H \bot n_i, \quad \text{and} \quad H \subset H(n_i).$$
By construction, $\mu(H) < \mu(H(n_i)) = \depth_{\nu}(\mathbf 0)$. Hence $\depth_{\nu}(o) < \depth_{\nu}(\mathbf 0)$.
But we assumed that $o$ is a Tukey median, i.e., $\depth_{\nu}(o) = \sup\limits_{x \in V} \depth_{\nu}(x) = \depth_{\nu}(\mathbf 0)$,
a contradiction.

\noindent {\bf Assertion 3.} For every $\nu, \nu' \in \mathcal M_a(V)$ and their respective Tukey medians $o, o'$ we have
\begin{multline}\label{eq:2}
\depth_{\nu}(o) - \| \nu - \nu' \| \leq \depth_{\nu'}(o) \leq \depth_{\nu'}(o') \leq \\
\depth_{\nu}(o') + \| \nu - \nu' \| \leq \depth_{\nu}(o) + \| \nu - \nu' \|.
\end{multline}
Hence Assertion 3 follows.

\noindent {\bf Assertion 4.} Let $\nu \in \mathcal M_a(V)$. Denote $o = o(\nu)$.

To prove the assertion, it is enough to prove the following claim: given an arbitrary neighborhood $U$ of $o$ there exists $\varepsilon > 0$
such that $o(\nu') \in U$ whenever $\nu' \in \mathcal M_a(V)$ and $\| \nu - \nu' \| < \varepsilon$.

Choose a $(d + 1)$-tuple of half-spaces
$$(H_1, H_2, \ldots, H_{d + 1})$$
satisfying the requirements of Assertion 1.

Next, choose a $(d + 1)$-tuple of half-spaces
$$(H'_1, H'_2, \ldots, H'_{d + 1})$$
such that
$$\partial H_i \parallel \partial H'_i, \quad H_i \subset \relint H'_i, \quad \text{and} \quad
S = \bigcap\limits_{i = 1}^{d + 1} H'_i \subset U.$$
By construction, $S$ is a $d$-simplex, and $o \in \relint S$.

Denote
$$\delta = \min\limits_{1 \leq i \leq d + 1} \nu(H'_i) - \nu(H_i).$$
Since $\nu$ is nice, $\delta > 0$. We will show that $\varepsilon = \delta / 2$ is sufficient.

Indeed, let $\nu'$ satisfy the assumptions of our claim. Assume, for a contradiction, that $o(\nu') \notin S$.
Then for some $i$ we have $o(\nu') \in \mathbb R^d \setminus H'_i$. Therefore
\begin{multline*}
\depth_{\nu'}(o(\nu')) \leq \nu'(\mathbb R^d \setminus H'_i) \leq \nu(\mathbb R^d \setminus H'_i) + \| \nu - \nu' \|
\leq \nu(\mathbb R^d \setminus H_i) - \delta + \| \nu - \nu' \| \\
< \depth_{\nu}(\mathbf 0) - \delta + \frac{\delta}{2} = \depth_{\nu}(\mathbf 0) - \frac{\delta}{2}.
\end{multline*}

On the other hand, according to \eqref{eq:2}, $\depth_{\nu'}(o(\nu')) > \depth_{\nu}(\mathbf 0) - \delta / 2$.
A contradiction shows that $o(\nu') \in S \subset U$. Thus our claim and Assertion 4 are proved.

\section{Generating $(d + 1)$-tuples of half-spaces}\label{sect:mainlemma}

Let $V$ be a Euclidean $d$-space. A $(d + 1)$-tuple of closed half-spaces
$$(H_1, H_2, \ldots, H_{d + 1}), \quad H_i \subset V$$
will be called {\it generating} if it satisfies $\bigcap\limits_{i = 1}^{d + 1} H_i = \{ \mathbf 0 \}$.
(See also the two-dimensional illustration, Figure~\ref{fig:1}.)

This definition implies, in particular, $\mathbf 0 \in \partial H_i$ for $i = 1, 2, \ldots, d + 1$. Another equivalent
definition would be as follows: a $(d + 1)$-tuple of closed half-spaces
$$(H_1, H_2, \ldots, H_{d + 1}), \quad H_i \subset V$$
is generating if for every $i$ one has $\mathbf 0 \in \partial H_i$ and
$$V \setminus  \mathbf 0 = \bigcup\limits_{i = 1}^{d + 1} \relint H_i.$$
Note that Lemma~\ref{lem:median}, assertion 1 in the case $o = \mathbf 0$ claims exactly the existence of a
generating $(d + 1)$-tuple with a certain property.

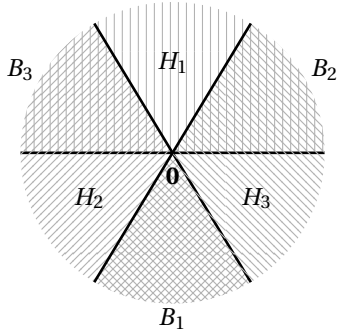
\begin{figure}
\begin{tikzpicture}[line width = 1pt]
\begin{scope}
\clip (0, 0) circle (2cm);
\filldraw[pattern=vertical lines, pattern color=black!30] (-3, 3) -- (-3, 0) -- (3, 0) -- (3, 3);
\filldraw[pattern=north east lines, pattern color=black!30] (-8, 0) -- (-3, 5) -- (3, -5) -- (0, -8);
\filldraw[pattern=north west lines, pattern color=black!30] (8, 0) -- (3, 5) -- (-3, -5) -- (0, -8);
\end{scope}
\node at (0, 1.2) {$H_1$};
\node at (1.1, -0.6) {$H_3$};
\node at (-1.1, -0.6) {$H_2$};
\node at (0, -2.2) {$B_1$};
\node at (-2.0, 1.1) {$B_3$};
\node at (2.0, 1.1) {$B_2$};
\node at (0, -0.3) {$\mathbf 0$};
\end{tikzpicture}
\caption{A generating 3-tuple in the plane.}\label{fig:1}
\end{figure}

In this section we prove several auxiliary facts concerning generating $(d + 1)$-tuples of cones.

For a generating $(d + 1)$-tuple $(H_1, H_2, \ldots, H_{d + 1})$ define the {\it corresponding} $(d + 1)$-tuple of simplicial cones
$(B_1, B_2, \ldots, B_{d + 1})$ by
\begin{equation}\label{eq:restricting-cones-def}
B_i = \bigcap\limits_{\substack{1 \leq j \leq d + 1\\ j \neq i}} H_j.
\end{equation}
Clearly, different $(d + 1)$-tuples of half-spaces generate different $(d + 1)$-tuples of simplicial cones.

\begin{lem}\label{lem:interior}
Let $(H_1, H_2, \ldots, H_{d + 1})$ be a generating $(d + 1)$-tuple of half-spaces in the Euclidean $d$-space $V$,
$(B_1, B_2, \ldots, B_{d + 1})$ --- the corresponding $(d + 1)$-tuple of simplicial cones.
Assume that a point $b_i \in \relint B_i$ be chosen for each $i = 1, 2, \ldots, d + 1$.
Then $\conv \{ b_1, b_2, \ldots, b_{d + 1} \}$ is a non-degenerate $d$-simplex, and
$$\mathbf 0 \in \relint \conv \{ b_1, b_2, \ldots, b_{d + 1} \}.$$
\end{lem}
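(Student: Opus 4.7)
The plan is to reduce the lemma to a linear-algebra fact about the matrix of inner products $\langle b_j, u_i \rangle$, where $u_i$ denotes the outward unit normal to $\partial H_i$. First I would extract the geometry encoded in the generating hypothesis: the equality $\bigcap_i H_i = \{\mathbf 0\}$ says that the positive hull of $\{u_1,\ldots,u_{d+1}\}$ is all of $V$, which in turn yields a relation $\sum_{i=1}^{d+1}\lambda_i u_i = \mathbf 0$ with all $\lambda_i > 0$ and forces any $d$ of the $u_i$ to be linearly independent. Consequently each cone $B_i$ is a $d$-dimensional simplicial cone, and $b_i \in \relint B_i$ translates to the strict sign pattern $\langle b_i, u_j \rangle < 0$ for $j \neq i$ (the inequality $\langle b_i, u_i \rangle > 0$ then follows by pairing $b_i$ with $\sum_k \lambda_k u_k = \mathbf 0$).

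The core step is to show $\mathbf 0 \in \relint \conv\{b_1,\ldots,b_{d+1}\}$. Since we have $d+1$ vectors in a $d$-space, a non-trivial linear relation $\sum_j \mu_j b_j = \mathbf 0$ exists, and I claim its coefficients must all have the same strict sign. Suppose not, and let $P = \{j : \mu_j > 0\}$ and $N = \{j : \mu_j < 0\}$ both be non-empty. Consider the two expressions for the vector $v = \sum_{j\in P}\mu_j b_j = -\sum_{j\in N}\mu_j b_j$; for each index $i$, use the expression that avoids $b_i$ (the $N$-sum if $i\in P$, the $P$-sum otherwise) together with the sign pattern above to conclude $\langle v, u_i \rangle < 0$. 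Then
\[
0 \;=\; \Bigl\langle v,\, \sum_i \lambda_i u_i \Bigr\rangle \;=\; \sum_i \lambda_i \langle v, u_i \rangle \;<\; 0,
\]
a contradiction. The degenerate possibility $v = \mathbf 0$ is dispatched in the same spirit: one then has $\sum_{j\in P}\mu_j b_j = \mathbf 0$ with $P$ a proper subset, and pairing against $u_i$ for any $i \notin P$ produces a strictly negative sum that is supposed to vanish.

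Once sign-coherence is in hand, the space of linear relations among the $b_j$ is necessarily one-dimensional (a second independent relation, subtracted appropriately, would have a zero coordinate and violate strict sign-coherence), so the $b_j$'s linearly span $V$. Normalizing the relation so that $\sum \mu_j = 1$ presents $\mathbf 0$ as a convex combination of the $b_j$ with strictly positive weights, which is precisely $\mathbf 0 \in \relint\conv\{b_1,\ldots,b_{d+1}\}$. Affine independence then comes for free: the affine hull contains $\mathbf 0$ and therefore coincides with the linear span $V$, so the $d+1$ points $b_j$ lie in a $d$-dimensional affine space and form a non-degenerate $d$-simplex.

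The main obstacle is the sign-coherence claim of the second paragraph, which is essentially the assertion that a certain $M$-matrix has a one-dimensional kernel spanned by a positive vector. I expect the pairing-with-$v$ trick above to replace any appeal to Perron--Frobenius, and to be the only place where the strict positivity of the $\lambda_i$ (i.e.\ the full strength of the generating hypothesis on the $u_i$) is genuinely used; everything else is bookkeeping around the geometry of the cones $B_i$.
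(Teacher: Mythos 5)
Your proposal is correct. It uses exactly the same two ingredients as the paper's proof --- the sign pattern $\langle b_i, u_i\rangle > 0$, $\langle b_i, u_j\rangle < 0$ for $j \neq i$, and the fact that the generating hypothesis forces a strictly positive relation $\sum_i \lambda_i u_i = \mathbf 0$ (equivalently, $\mathbf 0 \in \relint\conv\{u_1,\ldots,u_{d+1}\}$) --- but it runs the argument in the dual direction. The paper argues by contradiction: if $\mathbf 0 \notin \relint\conv\{b_i\}$, take a separating normal $n$ with $\langle b_i, n\rangle \ge 0$ for all $i$, write $n$ as a nonnegative combination of $d$ of the $u_i$ (conical Carath\'eodory), and pair with the omitted $b_{d+1}$ to get $\langle b_{d+1}, n\rangle < 0$. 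You instead analyze the kernel of the matrix $\bigl(\langle b_j, u_i\rangle\bigr)$ directly, showing every nontrivial linear relation among the $b_j$ is sign-coherent with no vanishing coefficient; your pairing of $v$ against $\sum_i\lambda_i u_i = \mathbf 0$ is the transpose of the paper's pairing of $b_{d+1}$ against $n$. What your route buys is that the conclusion comes out constructively rather than by separation: the normalized relation exhibits $\mathbf 0$ as a strictly positive convex combination of the $b_j$, and the one-dimensionality of the relation space gives affine independence (hence non-degeneracy of the simplex) explicitly, whereas the paper folds both failure modes into the single separation contradiction. Both proofs are complete and elementary; yours is slightly longer but more self-contained, avoiding the implicit appeal to conical Carath\'eodory in the step ``we may assume $n = \lambda_1 n_1 + \cdots + \lambda_d n_d$.''
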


\begin{proof}
For each plane $\partial H_i$ denote by $n_i$ the unit normal vector directed outwards $H_i$. We have
\begin{equation}\label{eq:convhull}
\mathbf 0 \in \relint \conv \{n_1, n_2, \ldots, n_{d + 1} \},
\end{equation}
otherwise
$$\bigcap\limits_{i = 1}^{d + 1} H_i \neq \{ \mathbf 0 \}.$$

By construction, for every $i \neq j$ we have
$$\left\langle b_i, n_i \right\rangle > 0, \quad \left\langle b_i, n_j \right\rangle < 0.$$

Now we argue by contradiction. Assuming that the statement of lemma is false, there is a plane $\alpha$ that separates $\mathbf 0$
from every $b_i$. (The separation need not be strict.) If $n$ is the normal vector to $\alpha$ pointing towards the open
half-space with all $b_i$, then
$$\left\langle b_i, n \right\rangle \geq 0 \quad \text{for all $i$.}$$

Due to~\eqref{eq:convhull}, we may assume without loss of generality that
$$n = \lambda_1 n_1 + \lambda_2 n_2 + \ldots + \lambda_d n_d,$$
where $\lambda_i \geq 0$ and not all $\lambda_i$ are zero. Then
$$\left\langle b_{d + 1}, n \right\rangle =
\lambda_1\left\langle b_{d + 1}, n_1 \right\rangle + \lambda_2\left\langle b_{d + 1}, n_2 \right\rangle + \ldots +
\lambda_d\left\langle b_{d + 1}, n_d \right\rangle < 0,$$
a contradiction.

\end{proof}

A generating $(d + 1)$-tuple $(H_1, H_2, \ldots, H_{d + 1})$ of half-spaces is said to have {\it weight}
$a$ with respect to a measure $\nu$, if
$$\min\limits_i \nu(H_i) = 1 - a.$$
Then we will write
$$\wgt_{\nu}(H_1, H_2, \ldots, H_{d + 1}) = a \quad \text{or} \quad \wgt_{\nu}(n_1, n_2, \ldots, n_{d + 1}) = a,$$
where $n_i$ is an outer normal for $H_i$.

\begin{lem}\label{lem:b-mes}
Let $V$ be a Euclidean $d$-space, $\nu \in \mathcal M(V)$, $\varepsilon \in \left(0, \tfrac{1}{(d + 1)(2d + 1)} \right]$.
Assume that a generating $(d + 1)$-tuple $(H_1, H_2, \ldots, H_{d + 1})$ of half-spaces in $V$ satisfies
$$\wgt_{\nu}(H_1, H_2, \ldots, H_{d + 1}) < \frac{1}{d + 1} + \varepsilon.$$
Then the corresponding $(d + 1)$-tuple of simplicial cones $(B_1, B_2, \ldots, B_{d + 1})$ satisfies
\begin{enumerate}
	\item $\sum\limits_{i = 1}^{d + 1} \nu(B_i) > 1 - (d + 1)\varepsilon$.
	\item $\frac{1}{d + 1} - (2d + 1)\varepsilon < \nu(B_i) < \frac{1}{d + 1} + \varepsilon$.
\end{enumerate}
\end{lem}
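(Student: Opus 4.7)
The plan is purely set-theoretic: I would exploit the fact that for a generating tuple the cones $B_i$ cover $V$ in an essentially disjoint fashion, while the weight hypothesis forces each half-space $H_i$ to have $\nu$-measure close to $1 - \tfrac{1}{d+1}$. Writing $A_i = V \setminus H_i$, the hypothesis reads $\nu(A_i) < \tfrac{1}{d+1} + \varepsilon$ for every $i$; niceness of $\nu$ will be used only via $\nu(\{\mathbf 0\}) = 0$ and $\nu(\partial H_i) = 0$.

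The upper bound in assertion~2 is the easy half. By the generating property, $B_i \cap H_i = \bigcap_k H_k = \{\mathbf 0\}$, so $B_i \setminus \{\mathbf 0\} \subseteq A_i$, and hence $\nu(B_i) \leq \nu(A_i) < \tfrac{1}{d+1} + \varepsilon$. For assertion~1, I would introduce the multiplicity function $T(x) = \{ i : x \in A_i \}$. The generating property gives $|T(x)| \geq 1$ for every $x \neq \mathbf 0$. The key combinatorial observation is that $x \in B_j$ is equivalent to $T(x) \subseteq \{j\}$, so $x \in \bigcup_j B_j$ (off the origin) iff $|T(x)| \leq 1$, and therefore $V \setminus \bigcup_j B_j$ coincides $\nu$-a.e.\ with $\{x : |T(x)| \geq 2\}$. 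Integrating the pointwise inequality $|T(x)| \geq \mathbf{1}_{|T|\geq 1}(x) + \mathbf{1}_{|T| \geq 2}(x)$ yields
$$\sum_{i=1}^{d+1} \nu(A_i) \;=\; \int |T(x)|\, d\nu(x) \;\geq\; 1 + \nu\bigl(V \setminus {\textstyle\bigcup_j} B_j\bigr),$$
and combining with $\sum_i \nu(A_i) < 1 + (d+1)\varepsilon$ gives $\nu(V \setminus \bigcup_j B_j) < (d+1)\varepsilon$. Since $B_i \cap B_j \subseteq \bigcap_k H_k = \{\mathbf 0\}$ for $i \neq j$, the $B_j$ are pairwise $\nu$-null intersecting, so $\sum_j \nu(B_j) = \nu(\bigcup_j B_j) > 1 - (d+1)\varepsilon$, which is assertion~1.

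The lower bound in assertion~2 then falls out by subtraction, using the already-established upper bounds on the other $\nu(B_j)$:
$$\nu(B_i) \;>\; 1 - (d+1)\varepsilon - \sum_{j \neq i} \nu(B_j) \;>\; 1 - (d+1)\varepsilon - d\Bigl(\tfrac{1}{d+1} + \varepsilon\Bigr) \;=\; \tfrac{1}{d+1} - (2d+1)\varepsilon.$$
I do not expect any serious obstacle here; the whole argument amounts to double-counting the measures of the $A_i$. The only point that demands care is the clean identification of $V \setminus \bigcup_j B_j$ with $\{x : |T(x)| \geq 2\}$ modulo a $\nu$-null set, which uses both the generating property (to guarantee $|T| \geq 1$ away from $\mathbf 0$) and niceness of $\nu$ (so that the single point $\mathbf 0$ can be safely discarded).
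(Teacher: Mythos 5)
Your proposal is correct and takes essentially the same route as the paper: assertion 1 is the identical double count of covering multiplicities (you count the complements $A_i = V \setminus H_i$ where the paper counts the $H_i$), and assertion 2 follows from $B_i \setminus \{\mathbf 0\} \subseteq V \setminus H_i$ together with subtraction against assertion 1, which the paper phrases equivalently as an ordering/pigeonhole contradiction.
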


\begin{proof}

One can see that the set $\bigcup\limits_{i = 1}^{d + 1} B_i$ is exactly the region that is covered exactly $d$ times by
the covering family of half-spaces $H_i$. In turn, the set $V \setminus \bigcup\limits_{i = 1}^{d + 1} B_i$ is covered at most $d - 1$ times. Hence
$$d \sum\limits_{i = 1}^{d + 1} \nu(B_i) + (d - 1) \left(1 - \sum\limits_{i = 1}^{d + 1} \nu(B_i) \right) > (d + 1)\left( 1 - \frac{1}{d + 1} - \varepsilon \right),$$
or
$$(d - 1) + \sum\limits_{i = 1}^{d + 1} \nu(B_i) > d - (d + 1)\varepsilon.$$
This proves Assertion 1.

Without loss of generality let
$$\nu(B_1) \leq \nu(B_2) \leq \ldots \leq \nu(B_{d + 1}).$$

Assume that the second inequality in Assertion 2 is false, and $\nu(B_{d + 1}) \geq \tfrac{1}{d + 1} + \varepsilon$. Then
$$\nu(H_{d + 1}) < 1 - \nu(B_{d + 1}) \leq 1 - \frac{1}{d + 1} - \varepsilon.$$
Consequently, $\wgt_{\nu}(H_1, H_2, \ldots, H_{d + 1}) > \tfrac{1}{d + 1} + \varepsilon$, a contradiction.

Assume that the first inequality in Assertion 2 is false, and $\nu(B_1) \leq \tfrac{1}{d + 1} - (2d + 1)\varepsilon$. Then, by Assertion 1,
$$\sum\limits_{i = 2}^{d + 1} \nu(B_i) \geq \frac{d}{d + 1} + d\varepsilon.$$
Thus $\nu(B_{d + 1}) \geq \tfrac{1}{d + 1} + \varepsilon$, contradicting the second inequality of Assertion 2, which has already been proved.
Therefore Assertion 2 is proved completely.

\end{proof}

\begin{lem}[Bijection Lemma]\label{lem:main}
Let $V$ be a Euclidean $d$-space, $\nu \in \mathcal M(V)$, $\varepsilon \in \left(0, \tfrac{1}{(d + 1)(3d + 2)} \right]$.
Assume that there are two generating $(d + 1)$-tuples of half-spaces in $V$,
$$(H_1, H_2, \ldots, H_{d + 1}), \quad (H'_1, H'_2, \ldots, H'_{d + 1}),$$
satisfying
$$\wgt_{\nu}(H_1, H_2, \ldots, H_{d + 1}), \wgt_{\nu}(H'_1, H'_2, \ldots, H'_{d + 1}) < \frac{1}{d + 1} + \varepsilon.$$
Then the corresponding $(d + 1)$-tuples of simplicial cones
$$(B_1, B_2, \ldots, B_{d + 1}), \quad (B_1, B_2, \ldots, B_{d + 1})$$
satisfy
$$\nu\left( B_i \cap B'_{\sigma(i)} \right) > \frac{1}{d + 1} - (3d + 2) \varepsilon \qquad
\nu\left( B_i \cap B'_{\sigma(j)} \right) = 0$$
for every $i, j \in \{1, 2, \ldots, d + 1\}$, $i \neq j$ and some permutation $\sigma$ of the set $\{1, 2, \ldots, d + 1\}$.
\end{lem}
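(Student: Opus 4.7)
The plan is to analyze the $(d+1)\times(d+1)$ overlap matrix $M_{ij}=\nu(B_i\cap B'_j)$ and show its support is exactly the graph of a permutation $\sigma$, from which the quantitative lower bound on $M_{i\sigma(i)}$ will follow. From Lemma~\ref{lem:b-mes} applied to both tuples I would read off the marginals: for each row $i$,
\[
\sum_{j} M_{ij}=\nu\!\Bigl(B_i\cap\bigcup_j B'_j\Bigr)\ge\nu(B_i)-\nu\!\Bigl(V\setminus\bigcup_j B'_j\Bigr)>\Bigl(\tfrac{1}{d+1}-(2d+1)\varepsilon\Bigr)-(d+1)\varepsilon=\tfrac{1}{d+1}-(3d+2)\varepsilon,
\]
using assertion~2 of Lemma~\ref{lem:b-mes} for the $\nu(B_i)$ bound and assertion~1 for $\nu(V\setminus\bigcup_j B'_j)\le(d+1)\varepsilon$; combined with the trivial $\sum_j M_{ij}\le\nu(B_i)<\tfrac{1}{d+1}+\varepsilon$ and the symmetric column estimates, every marginal of $M$ sits in the narrow window $(\tfrac{1}{d+1}-(3d+2)\varepsilon,\,\tfrac{1}{d+1}+\varepsilon)$.

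The heart of the proof is the \emph{geometric} statement that the support $G=\{(i,j):M_{ij}>0\}$ is the graph of a permutation, which, since $\nu$ has positive density, is equivalent to asking that for some $\sigma$ the closed convex cone $B_i\cap B'_k$ has empty relative interior whenever $k\neq\sigma(i)$. I would exploit the rigidity observation established inside the proof of Lemma~\ref{lem:interior}: for distinct $i,i'$ the closed cones $\overline{B_i}$ and $\overline{B_{i'}}$ meet only at the origin, because any common point $x$ satisfies $\langle x,n_m\rangle\le 0$ for every $m$ and hence vanishes by $\mathbf 0\in\relint\conv\{n_1,\dots,n_{d+1}\}$. Arguing by contradiction, a full-dimensional off-matching cell $B_{i_0}\cap B'_{k_0}$ admits representatives $b_i\in\relint B_i$ with $b_{i_0}\in\relint(B_{i_0}\cap B'_{k_0})$ and, say, $b_{\sigma^{-1}(k_0)}\in\relint(B_{\sigma^{-1}(k_0)}\cap B'_{k_0})$; then Lemma~\ref{lem:interior} produces a $d$-simplex around $\mathbf 0$ with two vertices inside the same closed cone $\overline{B'_{k_0}}$, and pairing this with the analogous rigidity applied to the second tuple should contradict the narrow marginal interval on the $B'_k$'s. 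Once the support equals the graph of some $\sigma$, the row-sum lower bound yields $M_{i\sigma(i)}>\tfrac{1}{d+1}-(3d+2)\varepsilon$ immediately.

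The principal obstacle is precisely this rigidity step: the naive matrix-theoretic tools (Hall's theorem, pigeonhole on maximum entries, or direct bounds on any single $M_{ij}$) do not rule out patterns such as $|S|=2,\,|N(S)|=1$ across the full range $\varepsilon\in(0,\tfrac{1}{(d+1)(3d+2)}]$, because once $\varepsilon>\tfrac{1}{(d+1)(6d+5)}$ the marginal estimates alone leave enough slack to accommodate two medium-sized entries in a single row or column, and they only give the weaker bound $\nu(B_i\cap B'_k)\le(3d+3)\varepsilon$ rather than exact vanishing. Turning the soft geometric fact that the cones $\overline{B_i}$ and $\overline{B_{i'}}$ touch only at the origin into a sharp quantitative contradiction calibrated to the hypothesis $\varepsilon\le\tfrac{1}{(d+1)(3d+2)}$ is where the main technical work lies.
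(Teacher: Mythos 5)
Your overall plan (show that the support of the overlap matrix $M_{ij}=\nu(B_i\cap B'_j)$ is the graph of a permutation, then read off the lower bound from the row marginals) is the same as the paper's, and your final step --- $\nu(B_i\cap B'_{\sigma(i)})\geq\nu(B_i)-\nu\bigl(V\setminus\bigcup_jB'_j\bigr)>\tfrac{1}{d+1}-(2d+1)\varepsilon-(d+1)\varepsilon$ --- is exactly how the paper concludes. But both of the substeps you flag as obstacles are genuinely incomplete, and you are missing the two ideas that make them work.

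For the perfect-matching step, the fix is not a sharper marginal estimate but a \emph{complement-covering} argument. Suppose Hall's condition fails: after relabelling, $B_1,\dots,B_m$ meet only $B'_1,\dots,B'_k$ in positive measure, $k<m$. Then $\bigcup_{i\leq m}B_i$ is covered (up to a null set) by $\bigcup_{j\leq k}B'_j\cup\bigl(V\setminus\bigcup_jB'_j\bigr)$, hence $V$ is covered a.e.\ by the $d+1-m$ cones $B_{m+1},\dots,B_{d+1}$, the $k$ cones $B'_1,\dots,B'_k$, and the two residual sets $V\setminus\bigcup_iB_i$, $V\setminus\bigcup_jB'_j$. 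Summing the upper bounds of Lemma~\ref{lem:b-mes} gives $(d+1+k-m)\bigl(\tfrac{1}{d+1}+\varepsilon\bigr)+2(d+1)\varepsilon>1$, i.e.\ $\varepsilon>\tfrac{m-k}{(d+1)(3d+3+k-m)}\geq\tfrac{1}{(d+1)(3d+2)}$, contradicting the hypothesis exactly at the stated threshold. This works where your computation fails because it uses only the \emph{upper} bound $\nu(B_i)<\tfrac{1}{d+1}+\varepsilon$ on the cones involved; your row-sum version also invokes the lossier lower bound $\nu(B_i)>\tfrac{1}{d+1}-(2d+1)\varepsilon$ on the deficient set, which is what leaves the slack you observed.

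For the exact vanishing of off-matching entries, no quantitative calibration is needed at all --- the contradiction is purely geometric and does not involve the marginal interval. Assume the matching is the identity and that $\nu(B_1\cap B'_2)>0$. Pick $b_1\in\relint B_1\cap B'_2$ and, for $i\geq2$, $b_i\in B_i\cap B'_i$ with $b_i\in\relint B_i$ (possible since these intersections have positive measure). Lemma~\ref{lem:interior} applied to the unprimed tuple gives $\mathbf 0\in\relint\conv\{b_1,\dots,b_{d+1}\}$. But every $b_i$ lies in some $B'_j$ with $j\neq1$, and $B'_j\subset H'_1$ for all $j\neq1$, so all $d+1$ points lie in the closed half-space $H'_1$ whose boundary contains $\mathbf 0$ --- contradiction. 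Your sketch gropes toward this (``two vertices inside the same closed cone'') but then points at the wrong target for the contradiction; as written, the rigidity step you identify as the main technical difficulty is not actually carried out.
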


\begin{proof}
Consider the bipartite graph $G$ whose vertex set is $\{ B_1, B_2, \ldots, B_{d + 1} \} \cup \{ B'_1, B'_2, \ldots, B'_{d + 1} \}$
(the parts are $\{ B_1, B_2, \ldots, B_{d + 1} \}$ and $\{ B'_1, B'_2, \ldots, B'_{d + 1} \}$), and an edge $(B_i, B'_j)$
is present in $G$ if and only if $\nu(B_i \cap B'_j) > 0$.

We claim that $G$ is a perfect matching. First we prove that $G$ contains a perfect matching as a subgraph.

Assume that there is no perfect matching in $G$. Then, up to a permutation of indices,
there exist $k, m \in \mathbb N$, $k < m$, such that $B_1, B_2, \ldots, B_m$ are connected only with $B'_1, B'_2, \ldots, B'_k$.
Indeed, this is a direct consequence of the Hall's Marriage Theorem~\cite[Theorem 25.1]{wil}.

Thus the set $\bigcup\limits_{i = 1}^m B_i$ is covered by the set
$$\bigcup\limits_{i = 1}^k B'_i \cup \left( V \setminus \bigcup\limits_{i = 1}^{d + 1} B'_i \right).$$
Hence
$$
\bigcup\limits_{i = m + 1}^{d + 1} B_i \cup \left( V \setminus \bigcup\limits_{i = 1}^{d + 1} B_i \right) \cup
\bigcup\limits_{i = 1}^k B'_i \cup \left( V \setminus \bigcup\limits_{i = 1}^{d + 1} B'_i \right) = V,
$$
and, consequently,
$$
\sum\limits_{i = m + 1}^{d + 1} \nu(B_i) + \nu \left( V \setminus \bigcup\limits_{i = 1}^{d + 1} B_i \right) +
\sum\limits_{i = 1}^k \nu(B'_i) + \nu \left( V \setminus \bigcup\limits_{i = 1}^{d + 1} B'_i \right) \geq 1.
$$
As $\varepsilon < \tfrac{1}{(d + 1)(3d + 2)} < \frac{1}{(d + 1)(2d + 1)}$, it is possible to apply Lemma~\ref{lem:b-mes},
replacing $\nu(B_i)$, $\nu(B'_i)$, $\nu \left( V \setminus \bigcup\limits_{i = 1}^{d + 1} B_i \right)$ and
$\nu \left( V \setminus \bigcup\limits_{i = 1}^{d + 1} B'_i \right)$ with their respective upper bounds. This yields
$$(d + 1 + k - m)\left( \frac{1}{d + 1} + \varepsilon \right) + 2(d + 1)\varepsilon > 1,$$
or
$$\varepsilon > \frac{m - k}{(d + 1)(3d + 3 + k - m)} \geq \frac{1}{(d + 1)(3d + 2)},$$
a contradiction. Therefore $G$ contains a perfect matching.

Up to a permutation of indices we can assume that for each $i$ an edge $(B_i, B'_i)$ is present in $G$. Now we aim to show that
no other edge of $G$ exists.

Without loss of generality, assume that $(B_1, B'_2)$ is also an edge of $G$. Choose a point $b_1 \in \relint B_1 \cap B'_2$.
For each $i > 1$ choose a point $b_i \in B_i \cap B'_i$.

We have $b_i \in \relint B_i$ for each $i$.
Hence, by Lemma~\ref{lem:interior},
$$\mathbf 0 \in \relint \conv \{ b_1, b_2, \ldots, b_{d + 1} \}.$$
On the other hand,
$$\mathbf 0 \notin \relint H'_1 \supset \conv \{ b_1, b_2, \ldots, b_{d + 1} \},$$
a contradiction. Therefore $G$ is exactly a perfect matching.

Finally, according to Lemma~\ref{lem:b-mes}, $\nu(B_i) > \frac{1}{d + 1} - (2d + 1)\varepsilon$,
and
$$\nu(B_i \setminus B'_i) \leq \nu \left( V \setminus \bigcup\limits_{i = 1}^{d + 1} B'_i \right) \leq (d + 1)\varepsilon.$$
Hence
$$\nu(B_i \cap B'_i) > \frac{1}{d + 1} - (3d + 2)\varepsilon.$$

\end{proof}

\section{The central ray of a simplicial cone}\label{sect:centralray}

Let us assume for this entire section that $V$ is a Euclidean $d$-space, and a measure $\nu \in \mathcal M(V)$ is fixed.

Let $B$ be a $d$-dimensional simplicial cone in $V$ with vertex $\mathbf 0$. For $t \in (0, 1]$ define
$$\mathcal H(B, t) = \{H : \text{$H$ is a half-space, $\mathbf 0 \in \partial H$, and $\nu(H \cap B) \geq t\nu(B)$} \}.$$

The cone $B$ does not contain any straight line entirely. Therefore we can choose a unit vector $n$ such that
$\left\langle n, b \right\rangle > 0$ for every $b \in B \setminus \mathbf 0$.

Consider the central projection $\pi_c$ of $V$ with the center at $\mathbf 0$ onto the plane
$$\Pi = \{ y : \left\langle n, y \right\rangle = 1 \}.$$
Define the probability measure $\nu^*$ in the plane $\Pi$ as follows:
$$\nu^*(X) = \frac{\nu(\pi_c^{-1}(X) \cap B)}{\nu(B)}$$
for every measurable $X \subseteq \Pi$. This means, for instance, that the support of $\nu^*$ is $\pi_c(B)$,
so $\nu^* \notin \mathcal M(\Pi)$, but we will not need the inclusion.

Let $H$ be a half-space in $V$ such that $\partial H$ is not orthogonal to $n$.
Then $H \cap \Pi$ is a half-space in $\Pi$ and
$$\nu^*(H \cap \Pi) = \frac{\nu(H \cap B)}{\nu(B)}.$$
If $t \geq \tfrac{d - 1}{d}$, then, by the Rado theorem,
$$\bigcap\limits_{H \in \mathcal H(B, t)} (H \cap \Pi) \neq \varnothing.$$
The intersection above is contained in $B$, hence there exists a non-zero vector $e \in B$ such that
$$\bigcap\limits_{H \in \mathcal H(B, t)} (H \cap B) \supseteq \{ \lambda e: \lambda \geq 0 \}.$$

The above argument for $t = \tfrac{d}{d + 1} > \tfrac{d - 1}{d}$ implies that the set
$$C(B) = \bigcap\limits_{H \in \mathcal H \left(B, \frac{d}{d + 1} \right)} (H \cap B)$$
is a convex cone of positive measure. We will call $C(B)$ the {\it central cone} of $B$.

Let $\mathbb S^{d - 1}$ be the unit sphere in $V$ centered at the origin. Define
$$e(B) = \frac{\int\limits_{\mathbb S^{d - 1} \cap C(B)} x\, dx}{\left\| \int\limits_{\mathbb S^{d - 1} \cap C(B)} x\, dx \right\|},$$
where $dx$ is the element of the $(d - 1)$-dimensional Lebesgue measure in $\mathbb S^{d - 1}$. We will call $e(B)$ the {\it central vector} of
$B$, and the ray along $e(B)$ the {\it central ray} of $B$.

We emphasize that $e(B)$ is a unit vector, and that $e(B) \in C(B)$.

\noindent {\it Remark.} As one can see, the definition of $C(B)$ and $e(B)$ depends on the measure $\nu$. Whenever an ambiguity
related to the varying measure $\nu$ is possible, we will use the notation $C(B; \nu)$ and $e(B; \nu)$.

Let us state some properties of central cones and central vectors.

\begin{prop}\label{prop:central-vec-contunuous}
$C(B; \nu)$ and $e(B; \nu)$ change continuously with a continuous change of $\nu$ and $B$.
\end{prop}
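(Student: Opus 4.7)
The plan is to first establish continuity of $C(B;\nu)$ as a closed convex cone (measured by the Hausdorff metric on its intersection with the unit sphere), and then extract continuity of $e(B;\nu)$ as a normalized integral of a continuous unit-vector field over a Hausdorff-continuous region.

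For the continuity of $C(B;\nu)$, I would write it as
$$C(B;\nu) = \bigcap_{n \in N(B,\nu)} H(n) \cap B, \qquad N(B,\nu) = \left\{ n \in \mathbb{S}^{d-1} : \nu(H(n) \cap B) \geq \tfrac{d}{d+1}\nu(B) \right\},$$
where $H(n) = \{x \in V : \langle x, n \rangle \leq 0\}$. Joint continuity of $(n, B, \nu) \mapsto \nu(H(n) \cap B)$ follows from niceness of $\nu$: the exponential tail bound controls the contribution at infinity, continuity of the density controls the bulk, and $B$ enters through the indicator of a simplicial cone whose boundary has $\nu$-measure zero. This gives outer semicontinuity of $N(B,\nu)$. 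For inner semicontinuity I would invoke a no-plateau argument: if $n_0 \in N(B_0,\nu_0)$ with $\nu_0(H(n_0) \cap B_0) = \tfrac{d}{d+1}\nu_0(B_0)$, then strict positivity of the density of $\nu_0$ allows one to produce nearby directions $n'$ (by infinitesimally rotating $\partial H(n_0)$ toward the interior of $B_0$) satisfying the strict inequality $\nu_0(H(n') \cap B_0) > \tfrac{d}{d+1}\nu_0(B_0)$. Each such $n'$ lies in $N(B,\nu)$ for every $(B,\nu)$ sufficiently close to the base point, and a diagonal construction produces the required approximating sequence for $n_0$. Combining the two semicontinuities yields Hausdorff continuity of $N(B,\nu)$, and the usual argument for convex cones defined by a continuously varying family of linear constraints with nonempty intersection interior then gives Hausdorff continuity of $\mathbb{S}^{d-1} \cap C(B;\nu)$.

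Continuity of $e(B;\nu)$ follows routinely. Writing $S(B,\nu) := \mathbb{S}^{d-1} \cap C(B;\nu)$, the $(d-1)$-dimensional spherical Lebesgue measure of $S(B,\nu)$ is bounded away from zero in a neighborhood of the base point, so the integral $\int_{S(B,\nu)} x\, dx$ varies continuously in $V$. It lies in the relative interior of $C(B;\nu)$ (being the centroid of a full-dimensional spherical cap of the cone) and is therefore nonzero, so normalization to unit length yields continuity of $e(B;\nu)$.

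The main obstacle is the inner semicontinuity of $N(B,\nu)$: one must exclude the pathological scenario in which an entire arc of critical directions simultaneously slips below the threshold $\tfrac{d}{d+1}\nu(B)$ under a perturbation of $(B,\nu)$. This is precisely the step where niceness of $\nu$ (continuous, strictly positive density) is essential, since it prevents the defining function $n \mapsto \nu(H(n) \cap B)$ from being constant on any open arc of $\mathbb{S}^{d-1}$.
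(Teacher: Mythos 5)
The paper does not actually contain a proof of this proposition --- it is explicitly declared ``routine'' and skipped --- so there is nothing to compare your argument against; what matters is whether your sketch is sound, and I believe it is. You correctly isolate the only non-trivial point, namely the inner semicontinuity of the constraint set $N(B,\nu)$, i.e.\ that a direction $n_0$ sitting exactly at the threshold $\nu(H(n_0)\cap B)=\tfrac{d}{d+1}\nu(B)$ can be approximated by directions satisfying the strict inequality (the real danger being a strict local maximum of $n\mapsto\nu(H(n)\cap B)$ at the threshold value, which would make the constraint evaporate under perturbation). Your rotation argument does handle this, but the choice of rotation axis deserves to be made explicit, since a generic rotation of $\partial H(n_0)$ both adds and removes full-dimensional wedges of $B$ and there is no a priori reason the balance is favourable. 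The correct choice: since $B$ is a pointed cone, fix $m$ with $\langle m,b\rangle>0$ for all $b\in B\setminus\{\mathbf 0\}$ and tilt $n_0$ towards $-m$, i.e.\ take $n'_\theta$ proportional to $\cos\theta\, n_0-\sin\theta\, m$. Then $H(n'_\theta)\cap B\supseteq H(n_0)\cap B$ (nothing is lost from $B$), and the threshold case forces $\partial H(n_0)$ to meet $\relint B$, so the added wedge is a nonempty open subset of $B$ and has positive $\nu$-measure by strict positivity of the density. This gives the strict inequality, and the rest of your argument (persistence of strict constraints under perturbation of $(B,\nu)$, the standard Hausdorff-continuity argument for intersections of continuously varying half-spaces with uniformly nonempty interior --- the interior point being supplied by the Rado centerpoint of the projected measure $\nu^*$ --- and continuity and nonvanishing of the spherical centroid because $C(B)$ lies in an open half-space) goes through. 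Your closing remark slightly misstates the obstruction: what must be excluded is not constancy of $n\mapsto\nu(H(n)\cap B)$ on an arc but an isolated threshold maximum; fortunately the rotation argument you give earlier addresses exactly the latter.
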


\noindent {\it Remark.}
In order to make Proposition~\ref{prop:central-vec-contunuous} explicit, we need to define a continuous change of a cone.
For instance, it is enough to define a basic neighborhood of a convex
$d$-dimensional cone $C \subset V$ containing no entire straight line. Namely, choose an arbitrary pair of closed convex cones
$C_{int}$ and $C_{ext}$ such that
$$C_{int} \subset \relint C, \quad C \subset \relint C_{ext}.$$
Then $C_{int}$ and $C_{ext}$ span the following basic neighborhood of $C$: the set of all closed cones $C'$ satisfying
$$C_{int} \subset \relint C', \quad C' \subset \relint C_{ext}.$$

The proof of Proposition~\ref{prop:central-vec-contunuous} is routine, and we therefore skip it.

\begin{lem}\label{lem:central-cone}
Let $B$ and $B'$ be simplicial cones, both with vertex $\mathbf 0$. Suppose that
$$\max(\nu(B), \nu(B')) \leq \frac{1}{d + 1} + \frac{1}{3(d + 1)^3},$$
$$\nu(B \cap B') \geq \frac{1}{d + 1} - \frac{3d + 2}{3(d + 1)^3}.$$
Then
$$B \supseteq C(B') \quad \text{and} \quad B' \supseteq C(B).$$
\end{lem}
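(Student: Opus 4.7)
The plan is to prove $C(B) \subseteq B'$ by contradiction (the statement $C(B') \subseteq B$ follows by the symmetric argument). Suppose $C(B) \not\subseteq B'$. Since $B'$ is a $d$-dimensional simplicial cone with apex $\mathbf{0}$, it is the intersection of exactly $d$ closed half-spaces $K_1, \ldots, K_d$ whose boundary hyperplanes pass through $\mathbf{0}$. So there is some index $j$ with $C(B) \not\subseteq K_j$; I set $K := K_j$ and let $K' := V \setminus \relint K$ denote the opposite half-space.

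From the definition of $C(B)$ as the intersection over all $H \in \mathcal H(B, \tfrac{d}{d+1})$, the relation $C(B) \not\subseteq K$ forces $K \notin \mathcal H(B, \tfrac{d}{d+1})$, i.e.
$$\nu(K \cap B) < \frac{d}{d+1}\,\nu(B).$$
Because $B' \subseteq K$, we get $B \cap B' \cap K' \subseteq B' \cap \partial K$, which has $\nu$-measure zero (the measure $\nu$ is nice, so it assigns zero mass to any hyperplane). Therefore
$$\nu(B \cap B') = \nu(B \cap B' \cap K) \leq \nu(B \cap K) < \frac{d}{d+1}\,\nu(B).$$

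Now I plug in the upper bound $\nu(B) \leq \frac{1}{d+1} + \frac{1}{3(d+1)^3}$ and compare with the hypothesis $\nu(B \cap B') \geq \frac{1}{d+1} - \frac{3d+2}{3(d+1)^3}$. Using $\frac{d}{(d+1)^2} = \frac{1}{d+1} - \frac{1}{(d+1)^2}$, the derived upper bound rewrites as
$$\nu(B \cap B') < \frac{1}{d+1} - \frac{1}{(d+1)^2} + \frac{d}{3(d+1)^4}.$$
A direct contradiction with the hypothesis will follow once I verify
$$\frac{1}{d+1} - \frac{1}{(d+1)^2} + \frac{d}{3(d+1)^4} \leq \frac{1}{d+1} - \frac{3d+2}{3(d+1)^3},$$
which after clearing denominators (multiplying by $3(d+1)^4$) reduces to the elementary inequality $3d^2 + 6d + 2 \leq 3d^2 + 6d + 3$, i.e.\ $2 \leq 3$.

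There is no conceptual obstacle; the proof is driven entirely by the fact that the facet hyperplanes of the simplicial cone $B'$ serve as admissible test half-spaces through the origin, together with an arithmetic check. The only delicate point is ensuring that the boundary set $B' \cap \partial K$ can be ignored, which is where niceness of $\nu$ (continuous density) is invoked. The slack in the final comparison ($2 < 3$) explains why the constants $\tfrac{1}{3(d+1)^3}$ and $\tfrac{3d+2}{3(d+1)^3}$ in the hypothesis are balanced exactly as they are.
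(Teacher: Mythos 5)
Your proof is correct and is essentially the paper's argument run in mirror image: the paper separates $C(B')$ from $B$ by a half-space through the origin containing $B$ and checks that it belongs to $\mathcal H\bigl(B', \tfrac{d}{d+1}\bigr)$, whereas you take a facet half-space of $B'$ missing part of $C(B)$ and check that it cannot belong to $\mathcal H\bigl(B, \tfrac{d}{d+1}\bigr)$ --- the same mechanism and the same numerical inequality. (A harmless typo: $(3d+2)(d+1) = 3d^2+5d+2$ and $3(d+1)^2 - d = 3d^2+5d+3$, so the comparison is $3d^2+5d+2 \leq 3d^2+5d+3$, still reducing to $2 \leq 3$.)
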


\begin{proof}
The conditions are symmetric for $B$ and $B'$, so it is enough to prove that $B \supseteq C(B')$.
Let $C(B') \setminus B \neq \varnothing$. Then there exists a half-space $H$ such that
$$\mathbf 0 \in \partial H, \quad B \subset H, \quad \text{and} \quad C(B') \setminus H \neq \varnothing.$$

But we have
$$\nu(H \cap B') \geq \nu(B \cap B') \geq \frac{1}{d + 1} - \frac{3d + 2}{3(d + 1)^3} \geq
\frac{d}{d + 1} \left( \frac{1}{d + 1} + \frac{1}{3(d + 1)^3} \right) \geq \frac{d}{d + 1} \nu(B').$$
Hence $C(B') \subset H$ by definition of $C(B')$. A contradiction.

\end{proof}

\noindent {\it Remark.}
This is the point of the paper where the strongest assumptions are made. Indeed, the lower bound for $\nu(B \cap B')$
will come from Lemma~\ref{lem:main}. To make this lower bound work in Lemma~\ref{lem:central-cone}, we need
$\varepsilon \leq \tfrac{1}{3(d + 1)^3}$, which is much stronger than the assumption of Lemma~\ref{lem:main}.

\noindent {\it Remark.}
Notice that the value $t = \tfrac{d}{d + 1}$ for $\mathcal H(B, t)$ is not optimal. We could use any
$t > \tfrac{d - 1}{d}$, but decreasing $t$ gives only a minor improvement to our results.

\begin{cor}\label{cor:inwards}
If $B$, $B'$ are as in Lemma~\ref{lem:central-cone}, then
$$e(B) \in B' \quad \text{and} \quad e(B') \in B.$$
\end{cor}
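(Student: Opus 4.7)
The plan is very short, because essentially all the work has been done in Lemma~\ref{lem:central-cone} together with the definition of $e(B)$.

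First, I would observe that $e(B) \in C(B)$ (a fact already noted after the definition of $e(B)$, which I would include here for completeness). This is because $C(B)$ is a closed convex cone with apex $\mathbf 0$, so it is closed under positive linear combinations of its elements. The integral
$$\int\limits_{\mathbb S^{d - 1} \cap C(B)} x\, dx$$
is a positive (vector-valued) integral combination of unit vectors lying in $C(B)$, hence itself lies in $C(B)$; normalizing by its (positive) length only rescales and does not affect membership in the cone. The same argument gives $e(B') \in C(B')$.

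Second, I would apply Lemma~\ref{lem:central-cone}, which asserts exactly that $C(B) \subseteq B'$ and $C(B') \subseteq B$ under the hypotheses of the corollary. Combining the two steps yields
$$e(B) \in C(B) \subseteq B' \quad \text{and} \quad e(B') \in C(B') \subseteq B,$$
which is the desired conclusion.

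There is no real obstacle: the only point that requires any thought is the claim $e(B) \in C(B)$, and even that reduces to the convex-cone closure of $C(B)$ under nonnegative integration. The hypotheses of Lemma~\ref{lem:central-cone} are imported verbatim into the corollary, so no additional estimates are needed.
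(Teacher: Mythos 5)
Your proof is correct and follows exactly the route the paper intends: the paper already records that $e(B) \in C(B)$ immediately after the definition, and the corollary is then the composition of that inclusion with $C(B) \subseteq B'$ and $C(B') \subseteq B$ from Lemma~\ref{lem:central-cone}. Your extra justification of $e(B) \in C(B)$ via the closed convex cone being stable under nonnegative integration is a harmless elaboration of what the paper leaves implicit.
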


\section{Ordered $(d + 1)$-tuples of small weight}\label{sect:order}

In this section we continue to write $V$ for a Euclidean $d$-space. For brevity, we write
$$a_0 = \frac{1}{d + 1} + \frac{1}{3(d + 1)^3}.$$

From now on, we start to distinguish ordered and unordered $(d + 1)$-tuples of half-spaces (cones). To emphasize the distinction,
we write unordered $(d + 1)$-tuples in circle brackets, and the ordered $(d + 1)$-tuples in square brackets.

Given a measure $\nu \in \mathcal M(V)$ and $a \in (0, 1)$, let $\mathcal R_{\nu}(a)$ denote the family of all (unordered) generating
$(d + 1)$-tuples of half-spaces $(H_1, H_2, \ldots, H_{d + 1})$ satisfying
$$\wgt_{\nu}(H_1, H_2, \ldots, H_{d + 1}) \leq a.$$

Assume, in addition, that $\nu \in \mathcal M^{\circ}_a(V)$ for some $a \in (0, a_0)$. Starting from the family $\mathcal R_{\nu}(a)$, we want
to obtain a family $\mathcal R_{\nu}^*(a)$ of ordered $(d + 1)$-tuples with the following natural properties.
\begin{enumerate}
	\item[(R1)] There is a bijection between $\mathcal R_{\nu}(a)$ and $\mathcal R^*_{\nu}(a)$: every unordered $(d + 1)$-tuple
				$(H_1, H_2, \ldots, H_{d + 1}) \in \mathcal R_{\nu}(a)$ corresponds to a unique ordered $(d + 1)$-tuple
                $$[H_{\sigma(1)}, H_{\sigma(2)}, \ldots, H_{\sigma(d + 1)}] \in \mathcal R^*_{\nu}(a),$$
                where $\sigma$ is some permutation of $\{ 1, 2, \ldots, d + 1 \}$.
	\item[(R2)] If
                $$[H_1, H_2, \ldots, H_{d + 1}], [H'_1, H'_2, \ldots, H'_{d + 1}] \in \mathcal R^*_{\nu}(a),$$
                $[B_1, B_2, \ldots, B_{d + 1}]$ and $[B'_1, B'_2, \ldots, B'_{d + 1}]$ are the corresponding $(d + 1)$-tuples of cones,
				then $\nu(B_i \cap B'_i) > \tfrac{1}{d + 1} - \tfrac{3d + 2}{3(d + 1)^3}$.
\end{enumerate}

\begin{lem}\label{lem:order-single}
For every $a \in \left( \tfrac{1}{d + 1}, a_0 \right)$ and every $\nu \in \mathcal M^{\circ}_{a}(V)$
there exists a family of ordered $(d + 1)$-tuples of half-spaces $\mathcal R_{\nu}^*(a)$ satisfying
the conditions (R1) and (R2).
\end{lem}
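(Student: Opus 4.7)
The plan is to fix a base element of $\mathcal R_\nu(a)$ together with an arbitrary ordering, use the Bijection Lemma (Lemma~\ref{lem:main}) to attach to every other unordered tuple a unique matching permutation relative to the base, and then order each tuple by the inverse of its matching permutation so that its cones line up with the base cones. If $\mathcal R_\nu(a) = \varnothing$, set $\mathcal R_\nu^*(a) = \varnothing$; otherwise fix any $T_0 \in \mathcal R_\nu(a)$ with an arbitrary ordering $[H_1^0,\ldots,H_{d+1}^0]$ and corresponding cones $[B_1^0,\ldots,B_{d+1}^0]$.

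Set $\varepsilon := \tfrac{1}{3(d+1)^3}$ and $c := \tfrac{1}{d+1} - \tfrac{3d+2}{3(d+1)^3}$. An elementary check gives $\varepsilon \le \tfrac{1}{(d+1)(3d+2)}$, $c > 0$, and $a - \tfrac{1}{d+1} < \varepsilon$, so every tuple in $\mathcal R_\nu(a)$ has weight strictly below $\tfrac{1}{d+1} + \varepsilon$ and Lemma~\ref{lem:main} applies to any pair of such tuples with this $\varepsilon$. For each $T = (H_1^T,\ldots,H_{d+1}^T) \in \mathcal R_\nu(a)$ with corresponding cones $(B_1^T,\ldots,B_{d+1}^T)$, the Bijection Lemma supplies a permutation $\sigma_T$ with $\nu(B_i^T \cap B_{\sigma_T(i)}^0) > c$ and $\nu(B_i^T \cap B_j^0) = 0$ for $j \ne \sigma_T(i)$; together with $c > 0$ these two conditions determine $\sigma_T$ uniquely. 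Now order $T$ as $[H_{\sigma_T^{-1}(1)}^T, \ldots, H_{\sigma_T^{-1}(d+1)}^T]$, whose corresponding cones are $[B_{\sigma_T^{-1}(1)}^T, \ldots, B_{\sigma_T^{-1}(d+1)}^T]$. This defines $\mathcal R_\nu^*(a)$ and delivers (R1) immediately.

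The step I expect to be the main technical obstacle is proving (R2), which reduces to a composition identity for the matching permutations. Given two tuples $T, T' \in \mathcal R_\nu(a)$, apply the Bijection Lemma to $T, T'$ to obtain a unique permutation $\tau$. I claim $\sigma_T = \sigma_{T'} \circ \tau$. To prove this, apply the elementary inequality $\nu(X \cap Y \cap Z) \ge \nu(X \cap Y) + \nu(Y \cap Z) - \nu(Y)$ to $X = B_i^T$, $Y = B_{\tau(i)}^{T'}$, $Z = B_{\sigma_{T'}(\tau(i))}^0$. The two pairwise intersections both exceed $c$, and Lemma~\ref{lem:b-mes} bounds $\nu(Y) < \tfrac{1}{d+1} + \varepsilon$, yielding
$$\nu(X \cap Y \cap Z) > 2c - \tfrac{1}{d+1} - \varepsilon = \tfrac{1}{d+1} - (6d+5)\varepsilon,$$
which is strictly positive because $3(d+1)^2 > 6d+5$ for every $d \ge 1$. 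Hence $\nu(B_i^T \cap B_{\sigma_{T'}(\tau(i))}^0) > 0$, and the uniqueness of $\sigma_T$ forces $\sigma_T(i) = \sigma_{T'}(\tau(i))$. Consequently $\tau(\sigma_T^{-1}(i)) = \sigma_{T'}^{-1}(i)$, and the ordered cones of $T, T'$ satisfy
$$\nu\bigl(\tilde B_i^T \cap \tilde B_i^{T'}\bigr) = \nu\bigl(B_{\sigma_T^{-1}(i)}^T \cap B_{\tau(\sigma_T^{-1}(i))}^{T'}\bigr) > c,$$
which is exactly (R2).
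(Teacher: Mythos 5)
Your proposal is correct and follows essentially the same route as the paper: fix a base tuple, use the Bijection Lemma to align every other tuple with it, and then establish (R2) for an arbitrary pair via an inclusion--exclusion bound showing a forced overlap, combined with the zero-off-matching clause of the Bijection Lemma to pin down the permutation. The only (cosmetic) difference is that you center the overlap estimate on the middle cone $B^{T'}_{\tau(i)}$ and phrase the conclusion as the composition identity $\sigma_T=\sigma_{T'}\circ\tau$, whereas the paper centers it on the base cone $B_i$ and then reapplies the Bijection Lemma to the pair; the numerical content is identical.
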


\begin{proof}
Choose an arbitrary unordered $(d + 1)$-tuple from $\mathcal R_{\nu}(a)$ and select an arbitrary order for it, say,
$$[H_1, H_2, \ldots, H_{d + 1}].$$

According to Lemma~\ref{lem:main}, for any unordered $(d + 1)$-tuple
$$(H'_1, H'_2, \ldots, H'_{d + 1}) \in \mathcal R_{\nu}(a)$$
there exists a permutation $\sigma$ of $\{ 1, 2, \ldots, d + 1 \}$ such that
\begin{equation}\label{eq:order_choice}
\nu \left( B_i \cap B'_{\sigma(i)} \right) \geq \frac{1}{d + 1} - \frac{3d + 2}{3(d + 1)^3} \qquad
\nu \left( B_i \cap B'_{\sigma(j)} \right) = 0,
\end{equation}
where $B_i$ and $B'_i$ denote the respective simplicial cones. Then put
$$[H'_{\sigma(1)}, H'_{\sigma(2)}, \ldots, H'_{\sigma(d + 1)}] \in \mathcal R^*_{\nu}(a).$$

Let
$$[H'_1, H'_2, \ldots, H'_{d + 1}], [H''_1, H''_2, \ldots, H''_{d + 1}] \in \mathcal R^*_{\nu}(a).$$
Then, combining~\eqref{eq:order_choice} and Lemma~\ref{lem:b-mes} for the cones $B_i$, we obtain
$$\nu \left( B_i \cap B'_i \right) > \frac{1}{2} \nu (B_i), \quad
\nu \left( B_i \cap B''_i \right) > \frac{1}{2} \nu (B_i).$$
Therefore
$$\nu \left( B'_i \cap B''_i \right) > 0.$$

Lemma~\ref{lem:main} implies
$$\nu \left( B'_i \cap B'_i \right) \geq \frac{1}{d + 1} - \frac{3d + 2}{3(d + 1)^3}.$$
Hence $\mathcal R^*_{\nu}(a)$ meets the required conditions.

\end{proof}

Let us also notice that the proof of Lemma~\ref{lem:order-single} implies the following proposition.

\begin{prop}
If a family $\mathcal R^*_{\nu}(a)$ satisfies the conditions (R1) and (R2), then
any other family satisfying these conditions is obtained by choosing an arbitrary permutation $\sigma$
and applying it to each element of $\mathcal R_{\nu}^*(a)$.
\end{prop}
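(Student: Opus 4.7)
The plan is to show that once we fix, via Lemma~\ref{lem:main}, the "natural" permutation between any two unordered generating tuples, the family $\mathcal R^*_\nu(a)$ is rigid up to a single global relabeling. The central observation I will use is that the permutation produced by Lemma~\ref{lem:main} is in fact \emph{unique}: the second condition $\nu(B_i \cap B'_{\sigma(j)}) = 0$ for $i\neq j$, combined with the first (which forces $\nu(B_i \cap B'_{\sigma(i)}) > 0$), completely determines $\sigma(i)$ as the only index $k$ for which $\nu(B_i \cap B'_k) > 0$. I will record this uniqueness at the start.

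Next, I will translate condition (R2) into the statement that, \emph{within} $\mathcal R^*_\nu(a)$, the unique permutation coming from Lemma~\ref{lem:main} between any two ordered tuples $[H_1,\dots,H_{d+1}]$ and $[H'_1,\dots,H'_{d+1}]$ of $\mathcal R^*_\nu(a)$ is the identity. Indeed, (R2) together with Lemma~\ref{lem:b-mes} guarantees that $\nu(B_i \cap B'_i) > 0$, and then the second half of Lemma~\ref{lem:main} forces $\nu(B_i \cap B'_j) = 0$ for $i\neq j$.

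Now let $\widetilde{\mathcal R}^*_\nu(a)$ be any other family satisfying (R1) and (R2). By (R1), each unordered tuple in $\mathcal R_\nu(a)$ appears exactly once in each of $\mathcal R^*_\nu(a)$ and $\widetilde{\mathcal R}^*_\nu(a)$, so there is a well-defined map sending $[H_1,\dots,H_{d+1}]\in\mathcal R^*_\nu(a)$ to the unique permutation $\sigma_{H_\bullet}$ such that
$$[H_{\sigma_{H_\bullet}(1)},\dots,H_{\sigma_{H_\bullet}(d+1)}]\in\widetilde{\mathcal R}^*_\nu(a).$$
Fix a reference element $[H_1,\dots,H_{d+1}]\in\mathcal R^*_\nu(a)$ with associated permutation $\sigma:=\sigma_{H_\bullet}$. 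For any other element $[H'_1,\dots,H'_{d+1}]\in\mathcal R^*_\nu(a)$ with associated permutation $\tau:=\sigma_{H'_\bullet}$, I will argue $\tau=\sigma$ as follows: by the second paragraph applied inside $\mathcal R^*_\nu(a)$, $\nu(B_i\cap B'_j)>0$ iff $i=j$; by the same reasoning applied inside $\widetilde{\mathcal R}^*_\nu(a)$ to the ordered tuples $[H_{\sigma(1)},\dots]$ and $[H'_{\tau(1)},\dots]$, $\nu(B_{\sigma(i)}\cap B'_{\tau(i)})>0$ for every $i$. Comparing, $\sigma(i)=\tau(i)$ for every $i$.

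The upshot is that the map $[H_\bullet]\mapsto\sigma_{H_\bullet}$ is constant, and applying this common permutation to every element of $\mathcal R^*_\nu(a)$ produces exactly $\widetilde{\mathcal R}^*_\nu(a)$. I do not expect any real obstacle here beyond the uniqueness statement in Lemma~\ref{lem:main}, which is the only subtle point; the rest is bookkeeping with (R1) and (R2).
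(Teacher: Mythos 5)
Your proof is correct and takes essentially the same route the paper intends: the paper derives this proposition from the proof of Lemma~\ref{lem:order-single}, whose key point is precisely the uniqueness of the matching permutation in Lemma~\ref{lem:main} that you isolate at the outset, so that (R2) pins down the relative order of any two tuples and the ambiguity reduces to a single global permutation. Your write-up merely makes explicit the bookkeeping the paper leaves implicit.
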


\begin{lem}\label{lem:ordering-in-neighborhood}
Let $a \in \left( \tfrac{1}{d + 1}, a_0 \right)$,  $\nu \in \mathcal M^{\circ}_{a}(V)$, $a_1 \in (a, a_0)$. Assume that the family of ordered $(d + 1)$-tuples
$\mathcal R^*_{\nu}(a_1)$ is chosen. Then for every measure $\nu' \in \mathcal M^{\circ}_{a}(V)$ satisfying $\| \nu' - \nu \| < a_1 - a$
one can choose the family of ordered $(d + 1)$-tuples $\mathcal R^*_{\nu'}(a)$ satisfying (R1), (R2) and the additional condition
$$\mathcal R^*_{\nu'}(a) \subseteq \mathcal R^*_{\nu}(a_1).$$
\end{lem}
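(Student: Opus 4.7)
The plan is to define $\mathcal R^*_{\nu'}(a)$ by literally inheriting the ordering that has already been fixed on $\mathcal R^*_\nu(a_1)$, so that the required inclusion $\mathcal R^*_{\nu'}(a)\subseteq\mathcal R^*_\nu(a_1)$ is built in, and then to verify (R1) and (R2) for the resulting family. The first observation is that any generating $(d+1)$-tuple $(H_1,\ldots,H_{d+1})$ with $\wgt_{\nu'}\le a$ automatically satisfies $\wgt_\nu\le a_1$: the bound $|\nu(H_i)-\nu'(H_i)|\le\|\nu-\nu'\|<a_1-a$ applied to each half-space shows that $\min_i\nu(H_i)>1-a_1$. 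Hence $\mathcal R_{\nu'}(a)\subseteq\mathcal R_\nu(a_1)$ as unordered families, so every unordered tuple on the left carries a canonical ordering coming from $\mathcal R^*_\nu(a_1)$. Taking these inherited orderings to be $\mathcal R^*_{\nu'}(a)$ makes (R1) and the inclusion immediate.

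The remaining content is (R2). Given two elements $[H_1,\ldots,H_{d+1}]$ and $[H'_1,\ldots,H'_{d+1}]$ of $\mathcal R^*_{\nu'}(a)$ with corresponding cones $B_i$ and $B'_i$, both tuples also lie in $\mathcal R^*_\nu(a_1)$, so the hypothesized (R2) for the $\nu$-family gives
\[
\nu(B_i\cap B'_i) > \frac{1}{d+1} - \frac{3d+2}{3(d+1)^3}
\]
for every $i$. On the other hand, since $a<a_0$, Lemma~\ref{lem:main} applied directly to $\nu'$ (with $\varepsilon=a-\tfrac{1}{d+1}<\tfrac{1}{3(d+1)^3}$) produces a permutation $\tau$ of $\{1,\ldots,d+1\}$ with $\nu'(B_i\cap B'_{\tau(j)})=0$ for $j\ne i$ and
\[
\nu'(B_i\cap B'_{\tau(i)}) > \frac{1}{d+1} - \frac{3d+2}{3(d+1)^3}.
\]
Thus it suffices to prove that $\tau$ is the identity, after which the second inequality becomes precisely the bound that (R2) demands for $\nu'$.

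This identification of $\tau$ is the main obstacle and is where the slack $\|\nu-\nu'\|<a_1-a$ is spent. The bipartite positive-measure graph underlying Lemma~\ref{lem:main} is a perfect matching, and $B_i$ is matched there with $B'_{\tau(i)}$, so it is enough to show that the edge $(B_i,B'_i)$ is also present, i.e., that $\nu'(B_i\cap B'_i)>0$. Combining the $\nu$-bound above with $\|\nu-\nu'\|<a_1-a<a_0-\tfrac{1}{d+1}=\tfrac{1}{3(d+1)^3}$ yields
\[
\nu'(B_i\cap B'_i) > \frac{1}{d+1} - \frac{3d+2}{3(d+1)^3} - \frac{1}{3(d+1)^3} = \frac{d}{(d+1)^2} > 0,
\]
forcing $\tau(i)=i$. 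The delicate point is precisely that $a_0$ was chosen just small enough for this final subtraction to remain positive; everything else is bookkeeping.
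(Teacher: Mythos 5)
Your proof is correct, and it rests on exactly the same two ingredients as the paper's: the observation that $\|\nu-\nu'\|<a_1-a$ sends $\mathcal R_{\nu'}(a)$ into $\mathcal R_{\nu}(a_1)$, and the numerical fact that $\tfrac{1}{d+1}-\tfrac{3d+2}{3(d+1)^3}-(a_1-a)>0$, which lets a ``large intersection'' with respect to one measure certify a positive intersection with respect to the other and thereby pin down the permutation via the perfect-matching part of Lemma~\ref{lem:main}. The only difference is the direction of transfer: the paper takes $\mathcal R^*_{\nu'}(a)$ as already constructed by Lemma~\ref{lem:order-single}, aligns it with $\mathcal R^*_{\nu}(a_1)$ on a single representative tuple, and then uses the $\nu$-matching to show every other tuple's ordering agrees; you instead define $\mathcal R^*_{\nu'}(a)$ by inheriting orderings from $\mathcal R^*_{\nu}(a_1)$ (making (R1) and the inclusion tautological) and then use the $\nu'$-matching to verify (R2). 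Your version is slightly more self-contained since it does not invoke Lemma~\ref{lem:order-single} for $\nu'$ at all. One pedantic point: tuples in $\mathcal R_{\nu'}(a)$ satisfy $\wgt_{\nu'}\le a$, not $<\tfrac{1}{d+1}+\varepsilon$ with $\varepsilon=a-\tfrac{1}{d+1}$, so you should take $\varepsilon$ slightly larger (still at most $\tfrac{1}{3(d+1)^3}$, which is possible since $a<a_0$); the paper is equally loose on this point in Lemma~\ref{lem:order-single}.
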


\begin{proof}
We start with the observation that $\mathcal R_{\nu'}(a) \neq \varnothing$. Indeed, this follows from
the assumption $\nu' \in \mathcal M^{\circ}_{a}(V)$ and assertion 1 of Lemma~\ref{lem:median}. Therefore we can choose a
$(d + 1)$-tuple $(H_1, H_2, \ldots, H_{d + 1}) \in \mathcal R_{\nu'}(a)$.

Since $\| \nu' - \nu \| < a_1 - a$, we conclude that
$$(H_1, H_2, \ldots, H_{d + 1}) \in \mathcal R_{\nu}(a_1).$$
Without loss of generality assume that $[H_1, H_2, \ldots, H_{d + 1}] \in \mathcal R^*_{\nu}(a_1)$.

Choose $\mathcal R^*_{\nu'}(a)$ so that $[H_1, H_2, \ldots, H_{d + 1}] \in \mathcal R^*_{\nu'}(a)$.

Assume that $[H'_1, H'_2, \ldots, H'_{d + 1}] \in \mathcal R^*_{\nu'}(a)$. Then
$$(H'_1, H'_2, \ldots, H'_{d + 1}) \in \mathcal R_{\nu}(a_1).$$
But we have $\nu'(B_i \cap B'_i) \geq \tfrac{1}{d + 1} - \tfrac{3d + 2}{3(d + 1)^3}$, and therefore
$$\nu(B_i \cap B'_i) \geq \frac{1}{d + 1} - \frac{3d + 2}{3(d + 1)^3} - (a_1 - a) > 0,$$
because $a_1 - a < a_0 - a < \tfrac{1}{3(d + 1)^3}$. Hence indeed $[H'_1, H'_2, \ldots, H'_{d + 1}] \in \mathcal R^*_{\nu}(a_1)$.

\end{proof}

To state the next lemma note that $\mathcal R^*_{\nu}(a)$ can be treated as a subset of the compact space $(\mathbb S^{d - 1})^{d + 1}$ with the natural
topology. Indeed, an ordered $(d + 1)$-tuple of half-spaces can be identified with the ordered $(d + 1)$-tuple of their outer unit normals.

\begin{lem}\label{lem:compact}
For every $a \in \left( \tfrac{1}{d + 1}, a_0 \right)$, $\nu \in \mathcal M^{\circ}_a(V)$ the set $\mathcal R^*_{\nu}(a)$ is compact.
\end{lem}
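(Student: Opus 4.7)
The plan is to show $\mathcal R^*_\nu(a)$ is closed in the compact space $(\mathbb S^{d-1})^{d+1}$. So take a sequence $[H_1^{(k)},\ldots,H_{d+1}^{(k)}]\in\mathcal R^*_\nu(a)$ whose outer unit normals converge, $n_i^{(k)}\to n_i$, and let $[H_1,\ldots,H_{d+1}]$ be the limit tuple, with associated simplicial cones $B_i$. I must verify three things for this limit: the weight bound, the generating property, and condition (R2) against some fixed base tuple $[H_1^{(0)},\ldots,H_{d+1}^{(0)}]\in\mathcal R^*_\nu(a)$ used in Lemma~\ref{lem:order-single}.

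For the weight bound, I use the fact that $\nu$ is nice, so has a continuous density and in particular assigns measure zero to every hyperplane. Dominated convergence then gives that $n\mapsto\nu(\{x:\langle n,x\rangle\le 0\})$ is continuous on $\mathbb S^{d-1}$, so $\nu(H_i)=\lim_k\nu(H_i^{(k)})\ge 1-a$ and $\wgt_\nu(H_1,\ldots,H_{d+1})\le a$. The same continuity principle (applied to finite intersections of half-spaces) shows that $\nu(B_i^{(k)}\cap X)\to\nu(B_i\cap X)$ for any fixed Borel set $X$; this is the tool that drives everything else.

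For the generating property, I apply this continuity to $X=B_i^{(0)}$. Condition (R2) holds for each pre-limit element, so
$$\nu(B_i^{(k)}\cap B_i^{(0)})>\tfrac{1}{d+1}-\tfrac{3d+2}{3(d+1)^3},$$
and passing to the limit yields $\nu(B_i\cap B_i^{(0)})\ge\tfrac{1}{d+1}-\tfrac{3d+2}{3(d+1)^3}>0$ for every $i$. In particular each $B_i$ has nonempty interior, so $B_i=\bigcap_{j\ne i}H_j$ is $d$-dimensional, which forces the $d$ normals $\{n_j:j\ne i\}$ to be linearly independent for every $i$. Since $0\in\conv\{n_i^{(k)}\}$ for each $k$ (pre-limit tuples are generating), closedness of convex hulls gives $0\in\conv\{n_1,\ldots,n_{d+1}\}$; but if $0$ sat on the boundary, a Carath\'eodory-type argument would place $0$ in the convex hull of some subset of at most $d$ of the $n_j$, contradicting that every $d$-subset of the $n_j$ is linearly independent. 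Hence $0\in\relint\conv\{n_1,\ldots,n_{d+1}\}$ and the limit tuple is generating.

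For condition (R2), I now have a genuine generating tuple of weight $\le a<a_0$, so Lemma~\ref{lem:main} applies to the pair $(H_1,\ldots,H_{d+1})$ and $(H_1^{(0)},\ldots,H_{d+1}^{(0)})$ and yields a permutation $\sigma$ with the usual strict lower bound on diagonal intersections and vanishing off-diagonal intersections. To identify $\sigma$ with the identity, note that for every $k$ the pre-limit satisfies $\nu(B_i^{(k)}\cap B_j^{(0)})=0$ for $j\ne i$, and by the same continuity argument this passes to $\nu(B_i\cap B_j^{(0)})=0$; comparing with the permutation produced by Lemma~\ref{lem:main} forces $\sigma=\mathrm{id}$. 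So condition (R2) holds for the limit tuple with the correct ordering, and the limit lies in $\mathcal R^*_\nu(a)$.

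The main obstacle is the potential degeneration of the limit tuple — the cones $B_i$ could in principle collapse to lower dimension and the generating property could fail with $0$ sliding to the boundary of $\conv\{n_i\}$. Both failure modes are ruled out simultaneously by the strict lower bound on $\nu(B_i^{(k)}\cap B_i^{(0)})$ carried in (R2), which survives the limit precisely because $\nu$ is nice; everything downstream (linear independence of every $d$-subset of normals, identification of $\sigma$, strict bound in (R2)) then follows cleanly.
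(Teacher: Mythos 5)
Your overall strategy is the paper's: show that $\mathcal R^*_{\nu}(a)$ is closed in the compact space $(\mathbb S^{d-1})^{d+1}$, pass the weight bound and the measure inequalities coming from (R2) to the limit (legitimate, since $\nu$ has a density, so the indicators of $B_i^{(k)}$ converge to that of $B_i$ off the $\nu$-null set $\bigcup_j \partial H_j$ and dominated convergence applies), and then pin down the ordering of the limit tuple. The weight bound and the identification $\sigma=\mathrm{id}$ at the end are fine.

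The gap is in your proof that the limit tuple is generating. You claim that since $B_i=\bigcap_{j\ne i}H_j$ has nonempty interior, the $d$ normals $\{n_j: j\ne i\}$ must be linearly independent. That implication is false: if two limit normals coincide, say $n_1=n_2$ (exactly the kind of degeneration you must exclude), then for $i\ge 3$ the cone $B_i$ is an intersection of only $d-1$ distinct half-spaces and can perfectly well be full-dimensional, while $\{n_j: j\ne i\}$ is linearly dependent. Full-dimensionality of $\bigcap_{j\ne i}H_j$ is equivalent to $\mathbf 0\notin\conv\{n_j : j\ne i\}$, not to independence, so your Carath\'eodory step has no foundation. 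The fact you have actually established, $\nu(B_i\cap B_i^{(0)})\ge \tfrac{1}{d+1}-\tfrac{3d+2}{3(d+1)^3}>0$ for all $i$, is strong enough, but it must be exploited as the paper does: pick $b_i\in\relint B_i\cap\relint B_i^{(0)}$ (nonempty since the intersection is a closed convex set of positive measure); Lemma~\ref{lem:interior} applied to the base tuple gives $\mathbf 0\in\relint\conv\{b_1,\dots,b_{d+1}\}$, so the half-spaces $H_i^*$ with outer normals $b_i$ form a generating tuple; since $\left\langle n_i,b_j\right\rangle<0$ for $j\ne i$, each $n_i$ lies in $\relint B_i^*$, and a second application of Lemma~\ref{lem:interior} yields $\mathbf 0\in\relint\conv\{n_1,\dots,n_{d+1}\}$, i.e.\ the limit tuple is generating. (The paper routes this through the central cones $C(B_i^{(1)})$ to secure a uniform separation $\delta$ before taking limits; your direct passage of the measures to the limit makes that detour unnecessary.) With this repair the rest of your argument goes through.
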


\begin{proof}
Consider an arbitrary converging sequence
$$[H_1^{(j)}, H_2^{(j)}, \ldots, H_{d + 1}^{(j)}] \in \mathcal R^*_{\nu}(a) \quad (j = 1, 2, \ldots).$$
Let $[B_1^{(j)}, B_2^{(j)}, \ldots, B_{d + 1}^{(j)}]$ be the respective $(d + 1)$-tuples of simplicial cones.
Write $H_i = \lim\limits_{j \to \infty} H_i^{(j)}$.

By property (R2) of $\mathcal R^*_{\nu}(a)$, we have
$$\nu(B^{(1)}_i \cap B^{(j)}_i) \geq \frac{1}{d + 1} - \frac{3d + 2}{3(d + 1)^3}.$$

Lemma~\ref{lem:central-cone} implies
$$C(B^{(1)}_i) \subseteq B^{(j)}_i.$$
Therefore, for each $1 \leq i, i' \leq d + 1$, $i \neq i'$ one has
$$C(B^{(1)}_i) \subset V \setminus H^{(j)}_i, \quad C(B^{(1)}_i) \subset H^{(j)}_{i'}.$$

For each $i = 1, 2, \ldots, d + 1$ take a unit vector $b_i$ pointing to the interior of $C(B^{(1)}_i)$.
If $H$ is a half-space with $\mathbf 0 \in \partial H$, let $n(H)$ denote the outer unit normal to $\partial H$.

As $b_i$ is separated from the boundary of $C(B_i)$, there exists $\delta > 0$, independent of $j$,
such that for any $1 \leq i, i' \leq d + 1$, $i \neq i'$ one has
$$\left\langle n(H^{(j)}_i), b_i \right\rangle \geq \delta, \quad
\left\langle n(H^{(j)}_{i'}), b_i \right\rangle \leq -\delta.$$
Taking the limit, we obtain
$$\left\langle n(H_i), b_i \right\rangle \geq \delta, \quad
\left\langle n(H_{i'}), b_i \right\rangle \leq -\delta.$$

By Lemma~\ref{lem:interior},
$$\mathbf 0 \in \relint \conv \{  b_1, b_2, \ldots, b_{d + 1} \}.$$
Hence, if $H^*_i$ is a half-space such that $b_i = n(H^*_i)$, then the $(d + 1)$-tuple
$$(H_1^*, H_2^*, \ldots, H_{d + 1}^*)$$
is generating. Let
$$(B_1^*, B_2^*, \ldots, B_{d + 1}^*)$$
be the corresponding $(d + 1)$-tuple of simplicial cones. Then one has
$$n(H_i) \in \relint B_i^*.$$
Lemma~\ref{lem:interior} immediately yields
$$\mathbf 0 \in \relint \conv \{  n(H_1), n(H_2), \ldots, n(H_{d + 1}) \}.$$
Hence the $(d + 1)$-tuple $[H_1, H_2, \ldots, H_{d + 1}]$ is generating.
Denote the corresponding simplicial cones by $B_i$ ($i = 1, 2, \ldots, d + 1$).

By continuity of the weight function,
$$\wgt_{\nu}(H_1, H_2, \ldots, H_{d + 1}) \leq a.$$

Finally, suppose that
$$[H_1, H_2, \ldots, H_{d + 1}] \notin \mathcal R^*_{\nu}(a).$$
Then there is a non-trivial permutation $\sigma$ such that
$$[H_{\sigma(1)}, H_{\sigma(2)}, \ldots, H_{\sigma(d + 1)}] \in \mathcal R^*_{\nu}(a).$$
Due to the property (R2), for each $j = 1, 2, \ldots$ we have
$$\nu(B_{\sigma(i)} \cap B^{(j)}_i) \geq \frac{1}{d + 1} - \frac{3d + 2}{3(d + 1)^3}.$$
Taking the limit for $j \to \infty$ yields
$$\nu(B_{\sigma(i)} \cap B_i) \geq \frac{1}{d + 1} - \frac{3d + 2}{3(d + 1)^3},$$
which is impossible. A contradiction shows that
$$[H_1, H_2, \ldots, H_{d + 1}] \in \mathcal R^*_{\nu}(a).$$

Thus $\mathcal R^*_{\nu}(a)$ is closed and hence compact.

\end{proof}

\section{Proof of Lemma~\ref{lem:structure}}\label{sect:cover}

We continue using the notation of the previous section. Also, throughout this section we will assume that
$a \in \left( \tfrac{1}{d + 1}, a_0 \right)$ is fixed. We prove Lemma~\ref{lem:continuous_n} to enable the definition
of $T_a^V$. Lemma~\ref{lem:interior2} shows that the image of $T_a^V$ is indeed a subset of $\mathcal T(V)$. Property 1
of Lemma~\ref{lem:structure} follows from Lemma~\ref{lem:aux-ei-continuous} and Corollary~\ref{cor:t-continuous}. Property 2
follows immediately from the definition, because all the auxiliary objects we use change naturally under isometries.

Let us define a vector function
$$e_i(\nu; n_1, n_2, \ldots, n_{d + 1}) : \mathcal M^{\circ}_a(V) \times ({\mathbb S}^{d - 1})^{d + 1} \to V.$$
I.e., the arguments are a measure $\nu \in \mathcal M^{\circ}_a(V)$ and $d + 1$ unit vectors in $V$.

In order to do that, choose, according to Lemma~\ref{lem:order-single}, the family $\mathcal R^*_{\nu}(a)$ of ordered $(d + 1)$-tuples of
half-spaces satisfying the properties (R1) and (R2) from the previous section.

Given a unit vector $n \in V$ denote by $H(n)$ the half-space in $V$ such that $\mathbf 0 \in \partial H(n)$ and $n$ is the
outer unit normal for $H(n)$. Let us also write $H_i = H(n_i)$. The definition of the function $e_i$ will consist of two mutually disjoint cases.

\noindent {\bf Case 1.} $[H_1, H_2, \ldots, H_{d + 1}] \notin \mathcal R^*_{\nu}(a)$.
Put
$$e_i(\nu; n_1, n_2, \ldots, n_{d + 1}) = \mathbf 0.$$

\noindent {\bf Case 2.} $[H_1, H_2, \ldots, H_{d + 1}] \in \mathcal R^*_{\nu}(a)$. Then, writing $B_i$ for the $i$th
simplicial cone corresponding to the generating $(d + 1)$-tuple $(H_1, H_2, \ldots, H_{d + 1})$, put
\begin{equation}\label{eq:ei_n}
e_i(\nu; n_1, n_2, \ldots, n_{d + 1}) = \bigl( a - \wgt_{\nu}(H_1, H_2, \ldots, H_{d + 1}) \bigr) e(B_i; \nu).
\end{equation}
Here $e(B_i; \nu)$ denotes, as in Section~\ref{sect:centralray}, the central vector of the cone $B_i$ with respect to the measure $\nu$.

The definition of $e_i(\nu; n_1, n_2, \ldots, n_{d + 1})$ is complete. Let us prove the following continuity property.

\begin{lem}\label{lem:continuous_n}
Let $\nu \in \mathcal M^{\circ}_a(V)$ be fixed. Then $e_i(\nu; n_1, n_2, \ldots, n_{d + 1})$ is continuous as a function from
$({\mathbb S}^{d - 1})^{d + 1}$ to $V$.
\end{lem}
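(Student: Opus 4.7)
My plan is to stratify the parameter space $(\mathbb S^{d - 1})^{d + 1}$ according to whether Case~1 or Case~2 applies, and to verify continuity on each stratum and across the transition between them. Let
$$E = \{(n_1, \ldots, n_{d + 1}) : [H(n_1), \ldots, H(n_{d + 1})] \in \mathcal R^*_\nu(a)\}.$$
Identifying half-spaces with their outer unit normals, Lemma~\ref{lem:compact} tells us that $\mathcal R^*_\nu(a)$ is compact, hence $E$ is closed in $(\mathbb S^{d - 1})^{d + 1}$. On the open complement $e_i \equiv \mathbf 0$, so continuity is immediate there.

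On the interior of $E$ I would check that every factor entering~\eqref{eq:ei_n} depends continuously on $(n_1, \ldots, n_{d + 1})$. The cones $B_i = \bigcap_{j \neq i} H(n_j)$ vary continuously in the cone topology used in Proposition~\ref{prop:central-vec-contunuous}; the weight $\wgt_\nu(H_1, \ldots, H_{d + 1}) = 1 - \min_i \nu(H(n_i))$ is continuous because $\nu$ is nice; and $e(B_i; \nu)$ is continuous in $B_i$ by Proposition~\ref{prop:central-vec-contunuous}. The product is therefore continuous on $\mathrm{int}\,E$.

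The hard part is to handle $\partial E$. My plan is to prove that every boundary point of $E$ satisfies $\wgt_\nu(H_1, \ldots, H_{d + 1}) = a$, so the prefactor $a - \wgt_\nu$ vanishes and~\eqref{eq:ei_n} evaluates to $\mathbf 0$ there. Granting this, continuity at a boundary point $(n_1, \ldots, n_{d + 1})$ follows: sequences approaching from the complement of $E$ give the constant $\mathbf 0$, while sequences $(n_1^{(j)}, \ldots, n_{d + 1}^{(j)}) \to (n_1, \ldots, n_{d + 1})$ in $E$ give $(a - \wgt^{(j)})\, e(B_i^{(j)}; \nu) \to \mathbf 0$, since the weight tends to $a$ while $e(B_i^{(j)}; \nu)$ is a unit vector.

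To establish the boundary claim I would argue the contrapositive: any $(n_1, \ldots, n_{d + 1}) \in E$ with $\wgt_\nu < a$ is an interior point of $E$. For any sufficiently small perturbation $(n'_1, \ldots, n'_{d + 1})$, the generating condition $\mathbf 0 \in \relint \conv\{n_1, \ldots, n_{d + 1}\}$ is open and the weight inequality is preserved by continuity, so the perturbed tuple lies in $\mathcal R_\nu(a)$. By property~(R1) there is a unique permutation $\sigma$ with $[H'_{\sigma(1)}, \ldots, H'_{\sigma(d + 1)}] \in \mathcal R^*_\nu(a)$; continuity of $\nu$-measures of cones yields $\nu(B_i \cap B'_i) \to \nu(B_i) > 0$, whereas Lemma~\ref{lem:main} applied to $[H_1, \ldots, H_{d + 1}]$ and $[H'_{\sigma(1)}, \ldots, H'_{\sigma(d + 1)}]$ forces $\nu(B_i \cap B'_{\sigma(j)}) = 0$ for $j \neq i$. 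These are compatible only if $\sigma$ is the identity, so the perturbed tuple stays in $E$. The principal obstacle throughout is exactly this local constancy of the ordering chosen for $\mathcal R^*_\nu(a)$, which is what allows the piecewise definition of $e_i$ to glue into a continuous function on all of $(\mathbb S^{d - 1})^{d + 1}$.
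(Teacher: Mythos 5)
Your proposal is correct and follows essentially the same route as the paper's proof: the paper's Subcase 1.1 is your openness of the complement of $E$ via Lemma~\ref{lem:compact}, its Subcase 1.2 is your vanishing-prefactor argument at tuples of weight exactly $a$, and its Case 2 is your local constancy of the ordering deduced from (R1), (R2) and Lemma~\ref{lem:main}. The only cosmetic difference is that a point of $\mathrm{int}\,E$ could in principle have weight exactly $a$, where your ``each factor is continuous'' reasoning on $\mathrm{int}\,E$ must be replaced by the same prefactor estimate you use on $\partial E$; the paper sidesteps this by casing on the value of $e_i$ rather than on the stratum of the domain.
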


\begin{proof}
Consider the two cases.

\noindent {\bf Case 1.} $e_i(\nu; n_1, n_2, \ldots, n_{d + 1}) = \mathbf 0$ holds. Then there are two subcases.

\noindent {\bf Subcase 1.1.} $[H_1, H_2, \ldots, H_{d + 1}] \notin \mathcal R^*_{\nu}(a)$. By Lemma~\ref{lem:compact}, the set
$\mathcal R^*_{\nu}(a)$ is compact. Therefore
$$[H(n'_1), H(n'_2), \ldots, H(n'_{d + 1})] \notin \mathcal R^*_{\nu}(a)$$
for any $(d + 1)$-tuple $[n'_1, n'_2, \ldots, n'_{d + 1}]$ of unit vectors close enough to $[n_1, n_2, \ldots, n_{d + 1}]$.
Hence
$$e_i(\nu; n'_1, n'_2, \ldots, n'_{d + 1}) \equiv \mathbf 0$$
in some neighborhood of $[n_1, n_2, \ldots, n_{d + 1}]$.

\noindent {\bf Subcase 1.2.} $[H_1, H_2, \ldots, H_{d + 1}] \in \mathcal R^*_{\nu}(a)$ and $\wgt_{\nu}(H_1, H_2, \ldots, H_{d + 1}) = a$.
Let arbitrary $\varepsilon > 0$ be given. Then for any $(d + 1)$-tuple $[n'_1, n'_2, \ldots, n'_{d + 1}]$ of unit vectors close enough
to $[n_1, n_2, \ldots, n_{d + 1}]$ one has
$$\wgt_{\nu}(H(n'_1), H(n'_2), \ldots, H(n'_{d + 1})) \geq a - \varepsilon.$$
Consequently, $\| e_i(\nu; n'_1, n'_2, \ldots, n'_{d + 1}) \| < \varepsilon$. This ends the proof of the subcase, since $\varepsilon$ is arbitrary.

\noindent {\bf Case 2.} $e_i(\nu; n_1, n_2, \ldots, n_{d + 1}) \neq \mathbf 0$. Then
$$[H_1, H_2, \ldots, H_{d + 1}] \notin \mathcal R^*_{\nu}(a) \quad \text{and} \quad \wgt_{\nu}(H_1, H_2, \ldots, H_{d + 1}) < a.$$
Let arbitrary $\varepsilon > 0$ be given. Then, if a $(d + 1)$-tuple $[n'_1, n'_2, \ldots, n'_{d + 1}]$ of unit vectors is close enough
to $[n_1, n_2, \ldots, n_{d + 1}]$, one has
$$\wgt_{\nu}(H(n'_1), H(n'_2), \ldots, H(n'_{d + 1})) < a.$$
The property (R2) and the uniqueness part of the property (R1) imply
$$[H(n'_1), H(n'_2), \ldots, H(n'_{d + 1})] \in \mathcal R^*_{\nu}(a).$$
Thus in some neighborhood of $[n_1, n_2, \ldots, n_{d + 1}]$ the function $e_i$ is defined according to~\eqref{eq:ei_n}. But both multipliers in
the right-hand side of~\eqref{eq:ei_n} are continuous (the second one due to Proposition~\ref{prop:central-vec-contunuous}), hence Case 2 follows.

\end{proof}

We continue by defining the function $e_i(\nu): \mathcal M^{\circ}_a(V) \to V$
as follows:
\begin{equation}\label{eq:ei}
e_i(\nu) = \int\limits_{(\mathbb S^{d - 1})^{d + 1}} e_i(\nu; n_1, n_2, \ldots, n_{d + 1}) \, dn_1 dn_2 \ldots dn_{d + 1}.
\end{equation}
Due to Lemma~\ref{lem:continuous_n}, the integration is indeed possible.

We emphasize that the ordered $(d + 1)$-tuple
$$[e_1(\nu), e_2(\nu), \ldots, e_{d + 1}(\nu)]$$
depends on the choice of $\mathcal R^*_{\nu}(a)$ from $(d + 1)!$ possible variants,
but the unordered $(d + 1)$-tuple
$$( e_1(\nu), e_2(\nu), \ldots, e_{d + 1}(\nu) )$$
does not. Therefore we have a map $T^V_a : \mathcal M^{\circ}_a(V) \to V^{d + 1} / \mathfrak S_{d + 1}$ defined by
$$T^V_a(\nu) = ( e_1(\nu), e_2(\nu), \ldots, e_{d + 1}(\nu) ).$$

Our aim will be to show that $T^V_a$ satisfies the requirements of Lemma~\ref{lem:structure}. We do it in the next lemmas,
leaving aside property 2, which is straightforward from the definition of $T^V_a$.

\begin{lem}\label{lem:interior2}
For every $\nu \in \mathcal M^{\circ}_a(V)$ one has $T^V_a(\nu) \in \mathcal T(V)$.
\end{lem}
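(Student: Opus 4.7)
My plan is to reduce Lemma~\ref{lem:interior2} to a single application of Lemma~\ref{lem:interior}, exploiting that every admissible $(d+1)$-tuple of cones constrains the integrated vector $e_i(\nu)$ with respect to one fixed reference tuple.

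First I would identify the support of the integrand in~\eqref{eq:ei}:
$$\Omega = \{(n_1,\ldots,n_{d+1}) \in (\mathbb S^{d-1})^{d+1} : [H(n_1),\ldots,H(n_{d+1})] \in \mathcal R^*_\nu(a),\ \wgt_\nu(H(n_1),\ldots,H(n_{d+1})) < a\}.$$
Outside $\Omega$ the integrand vanishes. By Lemma~\ref{lem:median} assertion 1 applied to $\nu \in \mathcal M^\circ_a(V)$ (which has Tukey depth strictly less than $a$), $\Omega$ contains a point; by continuity of the weight it is open.

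Fix any reference tuple $n^* \in \Omega$, with associated simplicial cones $B_i^*$. I claim $e_i(\nu; n_\cdot) \in B_i^*$ for every $n_\cdot \in \Omega$ and every $i$. Since $a < a_0$, Lemma~\ref{lem:b-mes} gives $\nu(B_i(n_\cdot)), \nu(B_i^*) < a_0$, and property (R2) of $\mathcal R^*_\nu(a)$ gives $\nu(B_i(n_\cdot) \cap B_i^*) \geq \tfrac{1}{d+1} - \tfrac{3d+2}{3(d+1)^3}$. Lemma~\ref{lem:central-cone} then applies to the pair $(B_i^*, B_i(n_\cdot))$ and yields $C(B_i(n_\cdot);\nu) \subset B_i^*$, so $e_i(\nu; n_\cdot) = (a - \wgt_\nu(n_\cdot))\, e(B_i(n_\cdot);\nu) \in B_i^*$. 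Integrating, $e_i(\nu) \in B_i^*$ because $B_i^*$ is a closed convex cone.

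The main obstacle is upgrading this containment to $e_i(\nu) \in \relint B_i^*$, which I would handle by continuity. At $n_\cdot = n^*$ the integrand is a positive multiple of $e(B_i^*;\nu)$, and this lies in $\relint B_i^*$: indeed $C(B_i^*;\nu)$ has positive $\nu$-measure and hence $d$-dimensional interior in $V$, so $\mathbb S^{d-1} \cap C(B_i^*;\nu)$ spans $d$ linearly independent directions, and the direction of its centroid --- which is $e(B_i^*;\nu)$ --- lies in the relative interior of $C(B_i^*;\nu)$ and a fortiori in $\relint B_i^*$. By Lemma~\ref{lem:continuous_n} the integrand is continuous in $n_\cdot$, hence remains in the open set $\relint B_i^*$ on an open neighborhood $U \subset \Omega$ of $n^*$. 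The contribution $\int_U e_i(\nu; n_\cdot)\, dn$ is then a nonzero vector in $\relint B_i^*$, the contribution from $\Omega \setminus U$ lies in the closed pointed cone $B_i^*$, and the identity $\relint B_i^* + B_i^* \subseteq \relint B_i^*$ (valid for any closed convex cone) forces $e_i(\nu) \in \relint B_i^*$. Finally, applying Lemma~\ref{lem:interior} to the generating $(d+1)$-tuple $(H(n_1^*),\ldots,H(n_{d+1}^*))$ with $b_i = e_i(\nu)$ yields that $\conv\{e_1(\nu),\ldots,e_{d+1}(\nu)\}$ is a non-degenerate $d$-simplex containing $\mathbf 0$ in its relative interior, so $T_a^V(\nu) \in \mathcal T(V)$.
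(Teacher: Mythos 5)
Your proposal is correct and follows essentially the same route as the paper: fix a reference generating tuple of small weight (whose existence comes from assertion 1 of Lemma~\ref{lem:median}), use Lemma~\ref{lem:central-cone} together with (R2) to place the integrand of~\eqref{eq:ei} inside the reference cones $B_i^*$, conclude $e_i(\nu)\in\relint B_i^*$ after integration, and finish with Lemma~\ref{lem:interior}. The only difference is cosmetic: the paper asserts $e_i(\nu;n'_\cdot)\in\relint C(B'_i)\subset\relint B_i$ at every point of the support, while you establish closed containment everywhere plus open containment near $n^*$ and then use $\relint B_i^*+B_i^*\subseteq\relint B_i^*$; both yield the same conclusion.
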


\begin{proof}
Equivalently, we have to prove
\begin{equation}\label{eq:inc}
\mathbf 0 \in \relint \conv \{ e_1(\nu), e_2(\nu), \ldots, e_{d + 1}(\nu) \}.
\end{equation}

Assertion 1 of Lemma~\ref{lem:median} implies that
$$\wgt_{\nu}(H_1, H_2, \ldots, H_{d + 1}) = \depth_{\nu}(\mathbf 0) < a$$
for some generating $(d + 1)$-tuple $(H_1, H_2, \ldots, H_{d + 1})$. Consequently,
$$e_i(\nu; n_1, n_2, \ldots, n_{d + 1}) \not\equiv \mathbf 0.$$
Let $B_i$ denote the simplicial cones corresponding to the $(d + 1)$-tuple
$(H_1, H_2, \ldots, H_{d + 1})$.

Choose an arbitrary $(d + 1)$-tuple
$$[H'_1, H'_2, \ldots, H'_{d + 1}] \in \mathcal R^*_{\nu}(a)$$
satisfying
$$\wgt_{\nu}(H'_1, H'_2, \ldots, H'_{d + 1}) < a.$$
If $[B'_1, B'_2, \ldots, B'_{d + 1}]$ is the corresponding $(d + 1)$-tuple of simplicial cones, and $n'_j$
is the outer unit normal to $H'_j$, then
\begin{equation}\label{eq:inside}
e_i(\nu, n'_1, n'_2, \ldots, n'_{d + 1}) \in \relint C(B'_i) \subset \relint B_i.
\end{equation}
Note that \eqref{eq:inside} holds in a set of positive measure, for instance, in some neighborhood of
$[n_1, n_2, \ldots, n_{d + 1}]$.

If a $(d + 1)$-tuple of unit vectors $[n'_1, n'_2, \ldots, n'_{d + 1}]$ cannot be obtained in such a way
\begin{equation}\label{eq:zero}
e_i(\nu; n'_1, n'_2, \ldots, n'_{d + 1}) = \mathbf 0.
\end{equation}

Integrating according to~\eqref{eq:ei}, one obtains that
$$e_i(\nu) \in \relint B_i,$$
and, in particular, $e_i(\nu) \neq \mathbf 0$.

Applying Lemma~\ref{lem:interior}, we immediately get \eqref{eq:inc}.

\end{proof}

\begin{lem}\label{lem:aux-ei-continuous}
Let $\nu \in \mathcal M^{\circ}_a(V)$. Writing $a_1 = \tfrac{a_0 + a}{2}$, assume that there is an infinite sequence of measures $\nu_j \in \mathcal M^{\circ}_a(V)$
($j = 1, 2, \ldots$) such that $\| \nu_j - \nu \| < \tfrac{a_1 - a}{2}$ for each $j$, and $\lim\limits_{j \to \infty} \nu_j = \nu$.
Let $\mathcal R^*_{\nu_j}(a) \subseteq \mathcal R^*_{\nu}(a_1)$ for each $j$, and also $\mathcal R^*_{\nu}(a) \subseteq \mathcal R^*_{\nu}(a_1)$.
Then for every sequence $n_1, n_2, \ldots n_{d + 1}$ ($n_k \in \mathbb S^{d - 1}$) and every $i = 1, 2, \ldots, d + 1$ one has
$$\lim\limits_{j \to \infty} e_i(\nu_j; n_1, n_2, \ldots n_{d + 1}) = e_i(\nu; n_1, n_2, \ldots, n_{d + 1}).$$
\end{lem}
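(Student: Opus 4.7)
The plan is to verify pointwise convergence by a case analysis on where the ordered tuple $[H(n_1),\ldots,H(n_{d+1})]$ sits with respect to the nested families $\mathcal R^*_\nu(a)\subseteq\mathcal R^*_\nu(a_1)$ (and the analogous family at each $\nu_j$). Three tools will suffice: the Lipschitz estimate $|\wgt_\nu(H_1,\ldots,H_{d+1})-\wgt_{\nu_j}(H_1,\ldots,H_{d+1})|\leq\|\nu-\nu_j\|$, which follows directly from the definition of weight and the $L^1$ metric; the continuity of the central vector $e(B;\nu)$ in both arguments, furnished by Proposition~\ref{prop:central-vec-contunuous}; and the uniqueness clause of property (R1) applied inside the common parent $\mathcal R^*_\nu(a_1)$, which forces the orderings chosen for $\nu$ and for each $\nu_j$ to coincide on any unordered tuple admissible for both.

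If $[H(n_1),\ldots,H(n_{d+1})]$ is not generating, both $e_i(\nu;\cdot)$ and $e_i(\nu_j;\cdot)$ vanish by definition. Assume it is generating and write $H_k=H(n_k)$. The main case is $[H_1,\ldots,H_{d+1}]\in\mathcal R^*_\nu(a)$ with $\wgt_\nu(H_1,\ldots,H_{d+1})<a$: for large $j$ the Lipschitz estimate gives $\wgt_{\nu_j}<a$, so the unordered tuple lies in $\mathcal R_{\nu_j}(a)$; its unique ordered representative in $\mathcal R^*_{\nu_j}(a)\subseteq\mathcal R^*_\nu(a_1)$ must coincide with $[H_1,\ldots,H_{d+1}]$ by the uniqueness clause of (R1) applied inside $\mathcal R^*_\nu(a_1)$. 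Hence formula \eqref{eq:ei_n} applies to both $\nu$ and $\nu_j$, and convergence reduces to $\wgt_{\nu_j}\to\wgt_\nu$ together with $e(B_i;\nu_j)\to e(B_i;\nu)$ (Proposition~\ref{prop:central-vec-contunuous}).

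The remaining configurations all produce $e_i(\nu;\cdot)=\mathbf 0$, and the task is to show $e_i(\nu_j;\cdot)\to\mathbf 0$. If $[H_1,\ldots,H_{d+1}]\notin\mathcal R^*_\nu(a_1)$, the inclusion $\mathcal R^*_{\nu_j}(a)\subseteq\mathcal R^*_\nu(a_1)$ gives that the tuple is not in $\mathcal R^*_{\nu_j}(a)$ either, so $e_i(\nu_j;\cdot)=\mathbf 0$ identically in $j$. If it belongs to $\mathcal R^*_\nu(a_1)\setminus\mathcal R^*_\nu(a)$, then (R1)-uniqueness forces $\wgt_\nu(H_1,\ldots,H_{d+1})>a$ strictly, whence the Lipschitz estimate pushes $\wgt_{\nu_j}>a$ for $j$ sufficiently large, again giving $e_i(\nu_j;\cdot)=\mathbf 0$. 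Finally, if $[H_1,\ldots,H_{d+1}]\in\mathcal R^*_\nu(a)$ with $\wgt_\nu=a$, then whenever the tuple lies in $\mathcal R^*_{\nu_j}(a)$ formula \eqref{eq:ei_n} contributes the scalar factor $a-\wgt_{\nu_j}\to 0$ multiplying the unit vector $e(B_i;\nu_j)$, and otherwise $e_i(\nu_j;\cdot)=\mathbf 0$; either way the limit is $\mathbf 0$.

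The main obstacle is the bookkeeping of ordering coherence: the family $\mathcal R^*_{\nu_j}(a)$ is an a priori independent choice for each $j$, and the assertion would fail if the orderings drifted arbitrarily with $j$. The assumptions $\mathcal R^*_{\nu_j}(a)\subseteq\mathcal R^*_\nu(a_1)$ and $\mathcal R^*_\nu(a)\subseteq\mathcal R^*_\nu(a_1)$ are precisely what is needed so that (R1) at level $a_1$ pins down a common ordering; once this coherence is secured, the remaining convergence inputs are routine continuity statements.
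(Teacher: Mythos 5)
Your proof is correct and follows essentially the same route as the paper: the same case split on membership of $[H(n_1),\ldots,H(n_{d+1})]$ in $\mathcal R^*_{\nu}(a)$ versus $\mathcal R^*_{\nu}(a_1)$, the same use of the Lipschitz bound on $\wgt$, of (R1)-uniqueness inside the common family $\mathcal R^*_{\nu}(a_1)$ to pin down the orderings, and of Proposition~\ref{prop:central-vec-contunuous} in the nondegenerate case. Your handling of the boundary situation $\wgt_{\nu}=a$ and of the tuples in $\mathcal R^*_{\nu}(a_1)\setminus\mathcal R^*_{\nu}(a)$ is just a slight repackaging of the paper's Subcases 1.2 and 1.3.
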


\begin{proof}
Consider the two possible cases.

\noindent {\bf Case 1.} $e_i(\nu; n_1, n_2, \ldots, n_{d + 1}) = \mathbf 0$. If, as before, $H_j$ is a half-space
with outer normal $n_j$ and $\mathbf 0 \in \partial H_j$, then there are three subcases.

\noindent {\bf Subcase 1.1.} $[H_1, H_2, \ldots, H_{d + 1}] \notin \mathcal R^*_{\nu}(a_1)$. Then for every $j$
$$[H_1, H_2, \ldots, H_{d + 1}] \notin \mathcal R^*_{\nu_j}(a_1),$$
hence $e_i(\nu_j; n_1, n_2, \ldots, n_{d + 1}) = \mathbf 0$.

\noindent {\bf Subcase 1.2.} $[H_1, H_2, \ldots, H_{d + 1}] \in \mathcal R^*_{\nu}(a_1)$ and $\wgt_{\nu}(H_1, H_2, \ldots, H_{d + 1}) \geq a$.
Let arbitrary $\varepsilon > 0$ be given. Then for every $j \geq j_0$ one has
$$\wgt_{\nu_j}(H_1, H_2, \ldots, H_{d + 1}) > a - \varepsilon,$$
hence $\| e_i(\nu_j; n_1, n_2, \ldots, n_{d + 1}) \| < \varepsilon$. Since $\varepsilon$ is arbitrary, Subcase 1.2 is proved.

\noindent {\bf Subcase 1.3.} $[H_1, H_2, \ldots, H_{d + 1}] \in \mathcal R^*_{\nu}(a_1)$, $\wgt_{\nu}(H_1, H_2, \ldots, H_{d + 1}) < a$, but
$$[H_1, H_2, \ldots, H_{d + 1}] \notin \mathcal R^*_{\nu}(a).$$
We claim that this subcase is impossible. Indeed, (R1) implies that there exists a non-trivial permutation $\sigma$ of $\{1, 2, \ldots, d + 1\}$ such that
$$[H_{\sigma(1)}, H_{\sigma(2)}, \ldots, H_{\sigma(d + 1)}] \in \mathcal R^*_{\nu}(a) \subseteq \mathcal R^*_{\nu}(a_1).$$
A contradiction to the uniqueness part of (R1) applied to $\mathcal R^*_{\nu}(a_1)$.

\noindent {\bf Case 2.} $e_j(\nu; n_1, n_2, \ldots, n_{d + 1}) \neq \mathbf 0$. Then
$$\wgt_{\nu}(H_1, H_2, \ldots, H_{d + 1}) < a.$$
Consequently, for some $j_0$ and every $j > j_0$ one has
$$\wgt_{\nu_j}(H_1, H_2, \ldots, H_{d + 1}) < a.$$
Then $[H_1, H_2, \ldots, H_{d + 1}] \in \mathcal R^*_{\nu_j}(a)$. Indeed, otherwise there exists a non-trivial permutation
$\sigma$ of $\{1, 2, \ldots, d + 1\}$ such that
$$[H_{\sigma(1)}, H_{\sigma(2)}, \ldots, H_{\sigma(d + 1)}] \in \mathcal R^*_{\nu_j}(a) \subseteq \mathcal R^*_{\nu}(a_1),$$
which leads to a contradiction similarly to Subcase 1.3.

Hence for $j > j_0$ the vector $e_i(\nu_j; n_1, n_2, \ldots, n_{d + 1})$ is defined according to~\eqref{eq:ei_n}. We have
$$\lim\limits_{j \to \infty} \wgt_{\nu_j}(H_1, H_2, \ldots, H_{d + 1}) = \wgt_{\nu}(H_1, H_2, \ldots, H_{d + 1}),$$
$$\text{and} \lim\limits_{j \to \infty} e(B_i; \nu_j) = e(B_i; \nu)$$
(the last identity is due to Proposition~\ref{prop:central-vec-contunuous}). Hence Case 2 follows.

\end{proof}

\begin{cor}\label{cor:t-continuous}
The map $T^V_a(\nu)$ is continuous.
\end{cor}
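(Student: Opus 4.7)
The plan is to combine Lemma~\ref{lem:ordering-in-neighborhood} (which produces compatible orderings in a neighborhood) with Lemma~\ref{lem:aux-ei-continuous} (which upgrades such compatibility to pointwise convergence of the integrand), and then apply dominated convergence to pass to the integrals~\eqref{eq:ei}. Since $T^V_a$ takes values in the unordered quotient $V^{d+1}/\mathfrak S_{d+1}$, permutation ambiguity in the ordering choices will be harmless at the end.

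Fix $\nu \in \mathcal M^{\circ}_a(V)$ and let $\nu_j \in \mathcal M^{\circ}_a(V)$ with $\nu_j \to \nu$. Set $a_1 = \tfrac{a_0 + a}{2}$, so $a_1 \in (a, a_0)$. First I fix, once and for all, a choice of the family $\mathcal R^*_{\nu}(a_1)$ given by Lemma~\ref{lem:order-single}. Since $a < a_1$, every element of $\mathcal R_{\nu}(a)$ lies in $\mathcal R_{\nu}(a_1)$, so by the uniqueness part of~(R1) for $\mathcal R^*_{\nu}(a_1)$, there is a unique family $\mathcal R^*_{\nu}(a) \subseteq \mathcal R^*_{\nu}(a_1)$ meeting (R1)--(R2); choose it. Next, discarding finitely many $j$, assume $\|\nu_j - \nu\| < \tfrac{a_1 - a}{2}$; then Lemma~\ref{lem:ordering-in-neighborhood} furnishes a family $\mathcal R^*_{\nu_j}(a) \subseteq \mathcal R^*_{\nu}(a_1)$ meeting (R1)--(R2). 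With these choices the hypotheses of Lemma~\ref{lem:aux-ei-continuous} are satisfied.

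By Lemma~\ref{lem:aux-ei-continuous}, for every fixed $(n_1, \ldots, n_{d+1}) \in (\mathbb S^{d-1})^{d+1}$ and every $i$,
$$\lim_{j \to \infty} e_i(\nu_j; n_1, \ldots, n_{d+1}) = e_i(\nu; n_1, \ldots, n_{d+1}).$$
Moreover, directly from the definition in Cases 1 and 2, the integrand is uniformly bounded: $\|e_i(\nu'; n_1, \ldots, n_{d+1})\| \leq a < 1$ for every $\nu' \in \mathcal M^{\circ}_a(V)$ (since $e(B_i; \nu')$ is a unit vector and the scalar prefactor lies in $[0, a]$). Hence the dominated convergence theorem, applied to~\eqref{eq:ei} on the compact space $(\mathbb S^{d-1})^{d+1}$, yields
$$\lim_{j \to \infty} e_i(\nu_j) = e_i(\nu) \qquad \text{in } V, \quad i = 1, 2, \ldots, d+1.$$

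Convergence of each coordinate in $V^{d+1}$ implies convergence of the image in the quotient $V^{d+1}/\mathfrak S_{d+1}$ under the canonical projection, so $T^V_a(\nu_j) \to T^V_a(\nu)$. Since $\nu$ and the approximating sequence were arbitrary, $T^V_a$ is continuous. The only real subtlety, which I flag as the main conceptual step rather than an obstacle, is the two-tier ordering argument in the second paragraph: one must thread the orderings of $\nu$ and of all the $\nu_j$ through the single fixed family $\mathcal R^*_{\nu}(a_1)$ so that the resulting orderings of the coordinate vectors $e_i$ are mutually consistent; any other choice differs from this one by a single global permutation of $\{1, \ldots, d+1\}$, which is invisible in $V^{d+1}/\mathfrak S_{d+1}$.
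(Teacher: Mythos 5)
Your proof is correct and takes essentially the same route as the paper: reduce to sequential continuity, thread all the orderings through a single fixed $\mathcal R^*_{\nu}(a_1)$ with $a_1 = \tfrac{a+a_0}{2}$ via Lemma~\ref{lem:ordering-in-neighborhood}, invoke Lemma~\ref{lem:aux-ei-continuous} for pointwise convergence of the integrand, and finish with the bounded/dominated convergence theorem using the bound $\|e_i\| \leq a$. Your added care in using the radius $\tfrac{a_1-a}{2}$ and in explicitly fixing $\mathcal R^*_{\nu}(a) \subseteq \mathcal R^*_{\nu}(a_1)$ only makes the hypotheses of Lemma~\ref{lem:aux-ei-continuous} more visibly satisfied than in the paper's own wording.
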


\begin{proof}
Choose an arbitrary measure $\nu_0 \in \mathcal M^{\circ}_a(V)$. Let us prove the continuity of $T_a(\nu)$
at $\nu = \nu_0$.

Since Cauchy and Heine definitions of continuity are equivalent in our case, we will use the latter. I.e.,
for an arbitrary sequence $\nu_j \to \nu_0$ ($j = 1, 2, \ldots$, $\nu_j \in \mathcal M^{\circ}_a(V)$) we will prove
$$\lim\limits_{j \to \infty} T_a(\nu_j) = T_a(\nu_0).$$

Without loss of generality we can assume that $\| \nu_j - \nu_0 \| < a_1 - a$, where $a_1 = \tfrac{a + a_0}{2}$.
Choose $\mathcal R^*_{\nu_0}(a_1)$ and the families
$$\mathcal R^*_{\nu_j}(a) \subseteq \mathcal R^*_{\nu_0}(a_1) \quad (j = 0, 1, 2, \ldots)$$
satisfying the requirements of Lemma~\ref{lem:ordering-in-neighborhood},

By Lemma~\ref{lem:aux-ei-continuous}, the sub-integral
function for $\nu_j$ in~\eqref{eq:ei} converges pointwise to that of $\nu_0$. Also, by definition,
$$\|e_i(\nu_j; n_1, n_2, \ldots, n_{d + 1}) \| \leq a.$$
Hence, by the Bounded Convergence Theorem (see, for example,~\cite[Section 4.2]{RoF2010}), $e_i(\nu_j)$
converges to $e_i(\nu)$.

\end{proof}

\section{Acknowledgements}

The authors acknowledge the hospitality of the Alfr\'ed R\'enyi Mathematical Institute, Budapest, where the research was done.
We are thankful to Imre B\'ar\'any for making the joint research possible, and for participating in inspiring discussions.
We appreciate the extremely useful comments and suggestions by Roman Karasev who changed our understanding of the topological part.
The first author also thanks Boris Bukh, Bo'az Klartag, Micha Sharir, and Gabriel Nivasch for useful discussions of the result.

\end{document}